\numberwithin{equation}{section}
\newtheorem{theorem}{Theorem}[section]
\newtheorem{lemma}[theorem]{Lemma}
\newtheorem{corollary}[theorem]{Corollary}
\newtheorem{proposition}[theorem]{Proposition}
\newtheorem{remark}[theorem]{Remark}
\newtheorem{example}[theorem]{Example}
\theoremstyle{definition}
\newtheorem{definition}[theorem]{Definition}
\pgfplotsset{compat=1.16}
\begin{document}

\title{\textbf{Symplectic techniques for stochastic differential equations on reductive Lie groups with applications to Langevin diffusions}}
\author{Erwin Luesink \thanks{\href{mailto:e.luesink@uva.nl} 
{e.luesink@uva.nl} Korteweg-de Vries Institute of Mathematics, University of Amsterdam} \and Oliver D. Street \thanks{\href{mailto:o.street18@imperial.ac.uk}{o.street18@imperial.ac.uk} Grantham Institute, Imperial College London} 
}
\date{}

\maketitle

\begin{abstract}
    We show how Langevin diffusions can be interpreted in the context of stochastic Hamiltonian systems with structure-preserving noise and dissipation on reductive Lie groups. Reductive Lie groups provide the setting in which the Lie group structure is compatible with Riemannian structures, via the existence of bi-invariant metrics. This structure allows for the explicit construction of Riemannian Brownian motion via symplectic techniques, which permits the study of Langevin diffusions with noise in the position coordinate as well as Langevin diffusions with noise in both momentum and position.
\end{abstract}

\section{Introduction}
In recent years, there has been growing interest in stochastic differential equations (SDEs) on Riemannian manifolds. They have applications in robotics, thermodynamics, material science, machine learning and spatial statistics. The key challenge with SDEs on Riemannian manifolds is the need for Riemannian Brownian motion (RBM), which is difficult to simulate in general. By specialising to reductive Lie groups, which are Riemannian manifolds with a compatible Lie group structure, we show that one can avoid the difficulties with RBM and obtain a framework for SDEs that is suitable for numerical simulation. Important examples of reductive Lie groups include spaces of invertible matrices $GL(n)$, spaces of unitary matrices $U(n), SU(n)$, spaces of symplectic matrices $Sp(n)$, and spaces of rotation matrices $SO(n)$.

\paragraph{Contributions of the paper.}
The present paper has two main contributions.
\begin{itemize}
    \item We demonstrate that the RBM -- and more generally, solutions to SDEs on reductive Lie groups -- can be sampled efficiently in a pathwise strong sense. That is, our approach avoids the use of geodesic random walks, which only converge to RBM in probability, and instead yields strong approximations of the continuous sample paths. This is explained in Section \ref{sec:eem}.
    \item Given a measure on a reductive Lie group, we employ symplectic techniques to derive families of Langevin equations as special cases of stochastic Hamiltonian systems with geometric dissipation on reductive Lie groups and use tools of symplectic geometry to prove that their ergodic invariant measure is the Gibbs measure. This is explained in Sections \ref{sec:mechanics} and \ref{sec:sampling}.
\end{itemize}

Combining our two major contributions, we pave the way for provably effective and efficient numerical schemes for sampling a measure on a reductive Lie group. We also emphasise the relations of kinetic Langevin equations to Hamiltonian mechanics, which explains the naturality of symplectic and energy-preserving (symmetric) integrators for the Hamiltonian parts. In combination with Lie-Trotter splitting, numerical integrators can be formulated that sample exactly from the invariant measure for kinetic Langevin diffusions, as shown in \cite{abdulle2014high}. 

In statistical mechanics, the invariant measure for deterministic Hamiltonian dynamics is the Gibbs measure, implying that deterministic Hamiltonian mechanics can be used to sample from measures. This insight underpins the Hamiltonian Monte Carlo method, introduced in \cite{duane1987hybrid} for sampling in lattice field theories. The Hamiltonian Monte Carlo method is a powerful and versatile sampling approach which is widely used in physics and beyond, but since the Hamiltonian is itself conserved under Hamiltonian mechanics and the dynamics is reversible, it may be slow to sample from the invariant measure. A different approach to sampling measures is by means of (kinetic) Langevin diffusions. Langevin diffusions are continuous-time stochastic processes governed by SDEs originally developed in physics where they model the motion of particles in a potential field connected to an external heat bath. 

Langevin dynamics can be considered on Riemannian manifolds, but we restrict to reductive Lie groups because their additional algebraic structure introduces powerful tools to construct RBM and techniques to analyse the SDEs. Numerical methods for (S)DEs on general Riemannian manifolds often require working in local charts, introducing complications such as step-size choices and chart transitions in the presence of noise. Differential equations on Lie groups can be solved without the need of local charts, but Lie groups, in general, lack canonical Riemannian metrics compatible with their group structure. This is where reductive Lie groups become particularly useful: they combine the structural advantages of Lie groups for numerical methods with the natural Riemannian metrics needed for diffusion processes. As a result, RBM enjoys a straightforward explicit construction on reductive Lie groups. By means of symplectic techniques, we construct a class of Langevin diffusions that contains the kinetic Langevin diffusion as a special case. The explicit access to RBM on reductive Lie groups allows for numerical discretisations which can be interpreted pathwise in contrast to methods that only provide stochastically weak approximations. 

By showing that Langevin diffusions on reductive Lie groups can be naturally formulated as a special class of stochastic Hamiltonian systems with geometric dissipation, we obtain a perspective that offers two key advantages. First, the coupling between noise and dissipation ensures that the invariant measure is always a Gibbs measure. This leads to a natural generalisation of kinetic Langevin diffusions, allowing for more flexible choices of noise while guaranteeing preservation of the target equilibrium distribution. Second, recasting the dynamics within the Hamiltonian framework enables and motivates the use of structure-preserving numerical integrators originally developed for deterministic Hamiltonian systems. Structure-preserving methods are known for their superior long-time accuracy and stability compared to non-structure-preserving schemes, and benefit from well-established pathwise error analysis. Moreover, this formulation allows for a unified treatment of Langevin-type diffusions with different noise structures. In particular, we distinguish between three variants: momentum Langevin, where noise is applied only to the momentum; position Langevin, where noise is applied only to the position; and symplectic Langevin, where noise affects both momentum and position. Each of these fits naturally into the stochastic Hamiltonian formalism, broadening the scope of Langevin dynamics on Lie groups.

Stochastic differential equations on manifolds is an active area of research, with several monographs discussing how to interpret solutions of such equations, see \cite{elworthy1998stochastic, hsu2002stochastic, ikeda2014stochastic}. SDEs on Riemannian manifolds and Lie groups have received growing attention from both theoretical and numerical perspectives. 

Such equations arise naturally in several fields. In physics and chemistry, for instance, active Brownian particles often exhibit rotational noise, as explored in \cite{kelidou2024active}. Similarly, machine learning on Lie groups frequently involves stochastic processes on curved spaces \cite{arnaudon2019irreversible, barbaresco2020lie}, with Riemannian stochastic gradient descent emerging as a powerful tool in optimization \cite{gess2024stochastic}. Langevin dynamics on curved geometries were first explored in the seminal work \cite{girolami2011riemann}. Recent advances in efficient numerical methods to solving stochastic differential equations based on geodesic random walks are discussed in works such as \cite{schwarz2023efficient, bharath2023sampling}. 

While a comprehensive overview is beyond the scope of this work, we note that Langevin dynamics in the Euclidean setting have been widely studied, with extensive theoretical and numerical analyses available; see for example \cite{kopec2015weakb, dalalyan2017theoretical, cheng2018convergence, vempala2019rapid, durmus2025uniform}. The kinetic Langevin process, in particular, has been the subject of detailed investigations in \cite{abdulle2014high, kopec2015weak, dalalyan2017theoretical, ma2021there, zhang2023improved, altschuler2024faster} and related works.

On Riemannian manifolds, Langevin diffusions have also been explored, typically using probabilistically weak approximations such as geometric random walks \cite{wang2020fast, cheng2022efficient, gatmiry2022convergence}. A recent and notable development is the detailed analysis of kinetic Langevin dynamics on Lie groups with left-invariant metrics \cite{kong2024convergence}, where a probabilistically strong approximation is used. In this work, we focus on SDEs on Lie groups equipped with bi-invariant metrics. While we restrict the geometry compared to \cite{kong2024convergence}, we generalize the class of diffusions considered by going beyond the kinetic Langevin formulation. Our approach contributes to the broader understanding of Langevin-type dynamics in non-Euclidean settings, where both geometry and stochasticity play essential roles.

\paragraph{Outlook.} In future work, we address the error analysis and the numerical implementation of the SDEs and Langevin diffusions discussed in the present work. We would also like to extend the results to reductive homogeneous spaces and discuss the setting for bi-invariant pseudo-metrics to incorporate curved geometries with negative curvature. We refer to Section \ref{sec:conclusion} for more details.

\paragraph{Overview of the paper.}
We provide a discussion of the necessary preliminaries on Lie groups and their relations to Riemannian manifolds in Section \ref{sec:preliminaries}, where we recall the seminal result of Cartan, explained in detail in \cite{milnor1976curvatures}, that proves the existence of bi-invariant metrics on reductive Lie groups. Using several results of the theory of reductive Lie groups, in Section \ref{sec:eem}, we construct RBM. We then introduce SDEs on smooth manifolds via Malliavin's transfer principle, which essentially states that sufficiently smooth manifold-valued curves can be replaced by manifold-valued semimartingales. In Section \ref{sec:mechanics}, we recall deterministic symplectic mechanics and the fundamental fact that deterministic symplectic dynamics have the Gibbs measure as their invariant measure. We then generalise this framework to stochastic symplectic mechanics via Malliavin's transfer principle and discuss that, even though all the geometric structure is maintained this way, the invariant measure changes. By introducing double-bracket dissipation, which is a geometry-preserving dissipation, we can cancel the effect of the noise on the invariant measure and re-obtain the Gibbs measure as the invariant measure, but associated to a system of SDEs. In Section \ref{sec:sampling}, we use the equations of stochastic geometric mechanics with so-called double-bracket dissipation to derive kinetic Langevin equations. Besides the usual kinetic Langevin equation, we also obtain a version that has noise only in the position and a version that has noise in both the momentum and the position. We illustrate the various kinds of Langevin dynamics through examples on reductive Lie groups and Euclidean space. In Section \ref{sec:conclusion}, we conclude and discuss future directions.

\section{Preliminaries on Lie groups}\label{sec:preliminaries}
In general, Riemannian manifolds and Lie groups are fundamentally different objects. A Riemannian manifold is a smooth manifold with a smoothly varying inner product defined at each point and a Lie group is a smooth manifold with a group structure whose actions are smooth. In certain situations, a Riemannian manifold can be equipped with a group structure which is compatible with the Riemannian structure or, vice versa, a Lie group can be equipped with a Riemannian metric which is compatible with the group structure. The necessary notion is that of a bi-invariant metric. To avoid confusion with regards to notation, we denote a Riemannian metric on a Lie group $G$ by $\langle\,\cdot\,,\,\cdot\,\rangle$ and reserve $g$ for denoting group elements unless otherwise explicitly specified. Bi-invariant metrics are denoted by $Q(\,\cdot\,,\,\cdot\,)$ and we denote the Killing form by $\kappa(\,\cdot\,,\,\cdot\,)$. Before discussing the conditions for existence of bi-invariant metrics in detail, we first give some preliminary definitions and discuss the intuition behind the necessity of bi-invariant metrics.

\subsection{Preliminary definitions}
In the study of Lie groups and differential equations, Lie algebras play a central role. This is because the tangent space at any point of a Lie group can be right-translated to the identity of the group, and the tangent space at the identity of the group is the Lie algebra. The natural operations of the Lie group on its Lie algebra lead to the adjoint representations and the natural operations of the Lie group on the dual of its Lie algebra provide the coadjoint representations. For more background on Lie groups and Lie algebras, we refer to \cite{rossmann2006lie, hall2013lie} and for more information on Riemannian geometry, we refer to \cite{lee2018introduction}. In the paragraph below, we recall the basic definitions for matrix Lie groups.

Given a Lie group, one has the smooth left-translation $L_g$ which, for $g,h\in G$, maps $h\mapsto gh$ and the smooth right-translation $R_g$ which, for $g,h\in G$, maps $h\mapsto hg$. Neither of these maps have fixed points, which makes their study complicated. We consider the inner automorphism $\Psi:G\to \mathrm{Aut}(G)$, given by $\Psi_g(h):= L_g\circ R_{g^{-1}}(h) = ghg^{-1}$, which has the group identity as a fixed point. The differential of $\Psi_g$ defines the action of the Lie group on its Lie algebra as $\mathrm{Ad}:G\to\mathrm{Aut}(\mathfrak{g})$ by $\mathrm{Ad}_g(X) := (d\Psi)_g(X) = g Xg^{-1}$, where $g\in G$ and $X\in\mathfrak{g}$. The map $\mathrm{Ad}:G\to\mathrm{Aut}(\mathfrak{g})$ is called the adjoint representation of the group. Taking another differential leads to the adjoint representation of the algebra $\mathrm{ad}:\mathfrak{g}\to\mathrm{End}(\mathfrak{g})$, defined by $\mathrm{ad}_X(Y) := (d\mathrm{Ad})_X(Y) = [X,Y] = XY-YX$, where $X,Y\in\mathfrak{g}$. Since a Lie algebra is a vector space, one can consider its dual space. The natural pairing for matrix Lie algebras is the Frobenius pairing (or Hilbert-Schmidt pairing) given by $\langle X,Y\rangle_{HS} = \mathrm{tr}(X^\dagger Y)$, where $\dagger$ denotes the Hermitian adjoint, which reduces to the transpose if the Lie algebra is defined over the field of real numbers. The coadjoint representation of the group $\mathrm{Ad}^*:G\to\mathrm{Aut}(\mathfrak{g}^*)$ is defined as the dual operator to the adjoint representation of the group $\mathrm{Ad}:G\to\mathrm{Aut}(\mathfrak{g})$ with a small twist, $\langle\mathrm{Ad}^*_{g^{-1}} \mu,X\rangle_{HS} := \langle\mu, \mathrm{Ad}_{g}X\rangle_{HS}$, where $g\in G$, $X\in\mathfrak{g}$ and $\mu\in\mathfrak{g}^*$. The additional inverse is necessary for $\mathrm{Ad}^*$ to be a left representation. The differential of the coadjoint representation of the group leads to the coadjoint representation of the algebra $\mathrm{ad}^*:\mathfrak{g}\to\mathrm{End}(\mathfrak{g}^*)$, defined by $\mathrm{ad}^*_X(\mu) = (d\mathrm{Ad}^*)_X(\mu)$, where $X\in\mathfrak{g}$ and $\mu\in\mathfrak{g}^*$. Alternatively, one can define the coadjoint representation of the algebra by duality as follows $\langle -\mathrm{ad}^*_X\mu, Y\rangle_{HS} = \langle \mu, \mathrm{ad}_XY\rangle_{HS}$, where the minus sign follows from the inverse used in the duality relation for the representations of the group.

One of the fundamental ideas that we use repeatedly in this work is the fact that a connected Lie group can be reconstructed from a neighbourhood of its identity. The Lie algebra captures the local structure of the group around the idenity element. The Lie exponential map $\exp:\mathfrak{g}\to G$ is a diffeomorphism in this neighbourhood of the identity and is defined, for $X\in\mathfrak{g}$, as $\exp(X)=\gamma(1)$ with $\gamma:\mathbb{R}\to G$ the unique one-parameter subgroup of $G$ whose tangent vector at the identity is $X$. The Lie exponential together with the adjoint and coadjoint representations satisfy the following identities
\begin{equation}\label{eq:liegroupliealgebra}
    \begin{aligned}
        \Psi_{g}(\exp X) &= \exp(\mathrm{Ad}_gX),\\
        \mathrm{Ad}_{\exp X} &= \exp\mathrm{ad}_X,\\
        \mathrm{Ad}^*_{\exp X} &= \exp \mathrm{ad}^*_X,
    \end{aligned}
\end{equation}
where $g\in G$ and $X\in\mathfrak{g}$. Note that, if the exponential map fails to be surjective, these identities hold only locally in a neighbourhood of the identity. For connected, simply connected Lie groups, these identies are valid globally, since the exponential map is a diffeomorphism in that case. The second and the third expressions are particularly important for our purposes. The proofs of these identities can be found in \cite{rossmann2006lie} and \cite{hall2013lie}. Next, we consider the preliminary definitions with regards to the intersection of Riemannian geometry with Lie theory.

\begin{definition}
    A Riemannian metric $\langle\,\cdot\,,\,\cdot\,\rangle$ on a Lie group $G$ is left-invariant if $L_g$ is an isometry for all $g\in G$. That is, if we have
    \begin{equation}
        \langle d(L_g)_hX, d(L_g)_h Y\rangle_{gh} = \langle X,Y\rangle_h \,,
    \end{equation}
    for all $g,h\in G$ and $X,Y\in T_hG$. Similarly, a Riemannian metric is right-invariant if $R_g$ is an isometry for all $g\in G$. 
\end{definition}

Given an inner product $\langle\,\cdot\,,\,\cdot\,\rangle$ on the Lie algebra $\mathfrak{g}\simeq T_eG$, one can always define a left-invariant metric on $G$ by setting
\begin{equation}
    \langle X,Y\rangle_g := \langle d(L_{g^{-1}})_gX, d(L_{g^{-1}})_gY\rangle_e \,,
\end{equation}
for all $g\in G$ and $X,Y\in T_gG$ and, again, the right-invariant case is analogous. Of particular importance to us is the notion of a bi-invariant metric.

\begin{definition}
    A bi-invariant metric $Q$ on a Lie group $G$ is a Riemannian metric that is both left- and right-invariant.
\end{definition}

The following two lemmas go back to the original work of Cartan and determine when a given left-invariant Riemannian metric is bi-invariant. A modern treatment can be found in \cite{milnor1976curvatures} These lemmas use the link between Lie groups and Lie algebras.

\begin{lemma}\label{lemma:adinvariantgroup}
    A left-invariant metric on a Lie group $G$ is also right-invariant if and only if, for each group element $g\in G$, the linear transformation $\mathrm{Ad}_g:\mathfrak{g}\to\mathfrak{g}$ is an isometry with respect to the induced metric on the Lie algebra $\mathfrak{g}$.
\end{lemma}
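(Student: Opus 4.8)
The plan is to reduce bi-invariance to a pointwise statement about the inner automorphisms and then differentiate. The key observation is the elementary group identity $R_g = L_g \circ \Psi_{g^{-1}}$ (equivalently $\Psi_g = L_g \circ R_{g^{-1}}$, as already noted in Section~\ref{sec:preliminaries}), together with the facts recalled above that $\Psi_g(e) = e$ and that the differential of $\Psi_g$ at $e$ is $\mathrm{Ad}_g$. Since the metric is assumed left-invariant, every $L_g$ is an isometry, so $R_g$ is an isometry precisely when $\Psi_{g^{-1}}$ is one; as $g$ ranges over $G$ so does $g^{-1}$, hence the left-invariant metric is right-invariant if and only if every conjugation $\Psi_g$ is an isometry of $G$.

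For the forward implication I would argue: if the left-invariant metric is also right-invariant, then each $\Psi_g = L_g\circ R_{g^{-1}}$ is a composition of isometries, hence an isometry fixing $e$, and therefore its differential at $e$, namely $\mathrm{Ad}_g : T_eG \to T_eG$, is a linear isometry of $(\mathfrak{g}, \langle\cdot,\cdot\rangle_e)$, which is exactly the induced metric on $\mathfrak{g}$ for a left-invariant metric.

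For the converse I would fix $g,h \in G$ and $X,Y \in T_hG$, transport to the identity using left-invariance, and exploit the identity $L_{(hg)^{-1}} \circ R_g = \Psi_{g^{-1}} \circ L_{h^{-1}}$ (a one-line check on group elements, noting that it sends $h$ to $e$), whose differential at $h$ is $\mathrm{Ad}_{g^{-1}} \circ d(L_{h^{-1}})_h$. This turns the left-invariant computation into
\begin{equation*}
\langle d(R_g)_h X,\, d(R_g)_h Y\rangle_{hg} = \bigl\langle \mathrm{Ad}_{g^{-1}} d(L_{h^{-1}})_h X,\; \mathrm{Ad}_{g^{-1}} d(L_{h^{-1}})_h Y\bigr\rangle_e ,
\end{equation*}
and since $\mathrm{Ad}_{g^{-1}}$ is assumed to be an isometry of $\mathfrak{g}$, and then $L_{h^{-1}}$ is one of $G$, the right-hand side collapses to $\langle X,Y\rangle_h$, proving $R_g$ is an isometry. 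In fact this single chain of equalities, specialised to $h = e$ and read in reverse, also yields the forward direction, so the two implications are really one computation.

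I do not expect a serious obstacle: the content is bookkeeping of left and right translations and their inverses, the only genuinely substantive point being the conjugation identity $L_{(hg)^{-1}} \circ R_g = \Psi_{g^{-1}} \circ L_{h^{-1}}$ that lets $\mathrm{Ad}$ appear after transporting to the identity. I would also remark that no connectedness, compactness, or completeness assumption on $G$ is needed, since both sides of the equivalence are conditions checked separately for each group element; such global hypotheses become relevant only later, when passing from Lie-algebra data (e.g.\ via the exponential map) to statements about all of $G$.
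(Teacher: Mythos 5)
Your proof is correct and follows essentially the same route as the paper's: both reduce right-invariance of a left-invariant metric to the isometry property of the conjugations $\Psi_g = L_g\circ R_{g^{-1}}$ and then pass to the differential $\mathrm{Ad}_g$ at the fixed point $e$. Your treatment of the converse, via the identity $L_{(hg)^{-1}}\circ R_g = \Psi_{g^{-1}}\circ L_{h^{-1}}$, supplies explicitly the transport-to-the-identity step that the paper's proof states only in passing ("then $L_gR_{g^{-1}}$ is an isometry and the result follows"), so no gap remains.
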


\begin{proof}
    The adjoint action of the group on its Lie algebra $\mathrm{Ad}_g$ is defined as the differential of the inner automorphism $\Psi_g = L_gR_{g^{-1}}$, i.e.,  $\mathrm{Ad}_g = d(L_gR_{g^{-1}})$. If the metric $\langle\,\cdot\,,\,\cdot\,\rangle$ is left-invariant, then $L_g$ is an isometry. If the metric is also right-invariant, then $R_g$ is also an isometry, meaning that $\langle d(L_g R_{g^{-1}})_eX, d(L_g R_{g^{-1}})_eY\rangle = \langle X,Y\rangle_e$ holds for all $g,h\in G$ and $X,Y\in T_eG\simeq\mathfrak{g}$. Conversely, if $\mathrm{Ad}_g$ is an isometry for every $g\in G$, then $L_g R_{g^{-1}}$ is an isometry and the result follows. 
\end{proof}

\begin{lemma}\label{lemma:adinvariantalgebra}
    A left-invariant metric on a connected Lie group is bi-invariant if and only if $\mathrm{ad}_X:\mathfrak{g}\to\mathfrak{g}$ is skew-adjoint for every $X\in \mathfrak{g}$.
\end{lemma}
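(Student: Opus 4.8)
The plan is to reduce the statement to Lemma \ref{lemma:adinvariantgroup} and then trade the group-level isometry condition for an infinitesimal one by differentiating and integrating along one-parameter subgroups. By Lemma \ref{lemma:adinvariantgroup}, a left-invariant metric on $G$ is bi-invariant if and only if $\mathrm{Ad}_g\colon\mathfrak{g}\to\mathfrak{g}$ is an isometry of $(\mathfrak{g},\langle\cdot,\cdot\rangle_e)$ for every $g\in G$, so it is enough to show that this holds exactly when each $\mathrm{ad}_X$ is skew-adjoint.

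For the ``only if'' direction I would fix $X\in\mathfrak{g}$ and feed the one-parameter subgroup $g(t)=\exp(tX)$ into the isometry condition. By the second identity in \eqref{eq:liegroupliealgebra} we have $\mathrm{Ad}_{\exp(tX)}=\exp(t\,\mathrm{ad}_X)$, so $\langle\exp(t\,\mathrm{ad}_X)Y,\exp(t\,\mathrm{ad}_X)Z\rangle_e=\langle Y,Z\rangle_e$ for all $Y,Z\in\mathfrak{g}$; differentiating this smooth identity at $t=0$ and using bilinearity yields $\langle\mathrm{ad}_XY,Z\rangle_e+\langle Y,\mathrm{ad}_XZ\rangle_e=0$, i.e. $\mathrm{ad}_X$ is skew-adjoint. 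Since $X$ was arbitrary, this direction is done.

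For the ``if'' direction, assuming every $\mathrm{ad}_X$ is skew-adjoint, I would run the previous computation in reverse: for fixed $X$ the function $t\mapsto\langle\exp(t\,\mathrm{ad}_X)Y,\exp(t\,\mathrm{ad}_X)Z\rangle_e$ has derivative $\langle\mathrm{ad}_X\exp(t\,\mathrm{ad}_X)Y,\exp(t\,\mathrm{ad}_X)Z\rangle_e+\langle\exp(t\,\mathrm{ad}_X)Y,\mathrm{ad}_X\exp(t\,\mathrm{ad}_X)Z\rangle_e=0$, hence is constant and equal to $\langle Y,Z\rangle_e$, so $\mathrm{Ad}_{\exp X}=\exp(\mathrm{ad}_X)$ is an isometry for each $X\in\mathfrak{g}$. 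To upgrade this to all of $G$ I would invoke connectedness: a connected Lie group is generated by any neighbourhood of the identity, hence by $\exp(\mathfrak{g})$, so every $g\in G$ is a finite product $\exp(X_1)\cdots\exp(X_k)$; since $\mathrm{Ad}$ is a homomorphism, $\mathrm{Ad}_g=\mathrm{Ad}_{\exp X_1}\circ\cdots\circ\mathrm{Ad}_{\exp X_k}$ is a composition of isometries and therefore an isometry, and Lemma \ref{lemma:adinvariantgroup} finishes the argument.

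I do not expect a genuine obstacle here: the ``only if'' direction is essentially one differentiation once the identity $\mathrm{Ad}_{\exp(tX)}=\exp(t\,\mathrm{ad}_X)$ is in hand, and the only step needing external input is the use, in the ``if'' direction, of the fact that a connected Lie group is generated by the image of the exponential map (equivalently, by any identity neighbourhood) together with the homomorphism property of $\mathrm{Ad}$. The main care is in organising the differentiate/integrate pair correctly and in noting that skew-adjointness of $\mathrm{ad}_X$ is precisely the condition making $\exp(t\,\mathrm{ad}_X)$ orthogonal with respect to $\langle\cdot,\cdot\rangle_e$.
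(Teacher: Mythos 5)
Your proof is correct and follows essentially the same route as the paper: reduce to the statement that every $\mathrm{Ad}_g$ is an isometry (Lemma \ref{lemma:adinvariantgroup}), pass between group and algebra via $\mathrm{Ad}_{\exp(tX)}=\exp(t\,\mathrm{ad}_X)$, and use connectedness together with the homomorphism property of $\mathrm{Ad}$ to go from a neighbourhood of the identity to all of $G$. The only (cosmetic) difference is that you extract $\mathrm{ad}_X^{*}=-\mathrm{ad}_X$ by differentiating the isometry identity at $t=0$, whereas the paper compares $\exp(\mathrm{ad}_X^{*})$ with $\exp(-\mathrm{ad}_X)$ directly; your version sidesteps the (minor) injectivity issue for the exponential that the paper's step implicitly relies on.
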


\begin{proof}
    For $g\in G$ sufficiently close to the identity, there exists a unique $X\in\mathfrak{g}$ such that $g=\exp X$. The adjoint action of the group is an orthogonal transformation if $\mathrm{Ad}_{g^{-1}} = \mathrm{Ad}^*_g$. Since $\mathrm{Ad}_g = \mathrm{Ad}_{\exp X} = \exp\mathrm{ad}_X$, we obtain $\exp\mathrm{ad}^*_X = \mathrm{Ad}^*_g = \mathrm{Ad}_{g^{-1}} = \exp (-\mathrm{ad}_X)$, from which we deduce that $\mathrm{ad}^*_X = -\mathrm{ad}_X$. A connected Lie group is generated by any neighbourhood of the identity and compositions of orthogonal transformations are orthogonal, so the proof is complete.
\end{proof}

The following theorem, stated and proved in \cite{milnor1976curvatures}, makes precise when a Lie group admits a bi-invariant metric. The proof of this result is also treated in detail \cite{alexandrino2015lie} and in \cite{gallier2020differential}.
\begin{theorem}\label{thm:Milnor}
    A connected Lie group $G$ admits a bi-invariant metric if and only if it is reductive, that is, if $G$ is isomorphic to the direct product of a compact group and an abelian group.
\end{theorem}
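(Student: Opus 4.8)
The statement splits into two implications, and for both I would reduce to the Lie algebra using Lemmas~\ref{lemma:adinvariantgroup} and~\ref{lemma:adinvariantalgebra}.

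For the implication ``reductive $\Rightarrow$ bi-invariant'', suppose $G \cong K \times A$ with $K$ compact and $A$ abelian; since a bi-invariant metric pulls back along a Lie group isomorphism, I may as well take $G = K \times A$. The adjoint action then satisfies $\mathrm{Ad}_{(k,a)} = \mathrm{Ad}_k \oplus \mathrm{id}_{\mathfrak a}$, so $\mathrm{Ad}(G)$ is a continuous image of the compact group $K$ and hence a compact subgroup of $GL(\mathfrak g)$. Averaging an arbitrary inner product on $\mathfrak g$ over $\mathrm{Ad}(G)$ against Haar measure yields an $\mathrm{Ad}$-invariant inner product, i.e.\ one for which every $\mathrm{Ad}_g$ is an isometry, and Lemma~\ref{lemma:adinvariantgroup} upgrades the associated left-invariant metric to a bi-invariant one. (Equivalently, one can take the product of a bi-invariant metric on $K$, obtained by this averaging, with any left-invariant metric on $A$, which is automatically bi-invariant since $\mathrm{ad}$ vanishes on the abelian $\mathfrak a$.)

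For the converse, suppose $G$ carries a bi-invariant metric, so by Lemma~\ref{lemma:adinvariantalgebra} the induced inner product $\langle\cdot,\cdot\rangle$ on $\mathfrak g$ makes every $\mathrm{ad}_X$ skew-adjoint; equivalently $\langle [X,Y],Z\rangle = \langle X,[Y,Z]\rangle$ for all $X,Y,Z$. I would first record that orthogonal complements of ideals are ideals (immediate from this invariance together with nondegeneracy), so that $\mathfrak g$ splits as an orthogonal direct sum of ideals; applying this to the centre gives $\mathfrak g = \mathfrak z(\mathfrak g) \oplus \mathfrak g'$ with $\mathfrak g' := \mathfrak z(\mathfrak g)^\perp = [\mathfrak g,\mathfrak g]$, an ideal with trivial centre. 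The crucial point is then the one-line computation $\kappa(X,X) = \mathrm{tr}(\mathrm{ad}_X^2) = -\lVert \mathrm{ad}_X\rVert_{HS}^2 \le 0$, valid because $\mathrm{ad}_X$ is skew-adjoint with respect to a positive-definite form, with equality exactly when $X$ is central. Restricted to $\mathfrak g'$ this says the Killing form of $\mathfrak g'$ is negative definite, so $\mathfrak g'$ is semisimple of compact type. By Weyl's theorem the connected, simply connected group $\widetilde K$ integrating $\mathfrak g'$ is compact, whence the universal cover of $G$ is $\widetilde G = \mathbb R^{\dim \mathfrak z(\mathfrak g)} \times \widetilde K$ and $G = \widetilde G / \Gamma$ for a discrete subgroup $\Gamma$ of the centre $Z(\widetilde G) = \mathbb R^{\dim \mathfrak z(\mathfrak g)} \times Z(\widetilde K)$, with $Z(\widetilde K)$ finite. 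Unwinding this quotient in stages — splitting off $\Gamma \cap \mathbb R^{\dim\mathfrak z(\mathfrak g)}$ (a lattice) to produce a torus and a vector-group factor, then quotienting by the remaining finite central subgroup — exhibits $G$ as a product of a compact group and a vector group, i.e.\ of a compact group and an abelian group.

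I expect the negative-definiteness computation and the ideal decomposition to be routine; the real content, and the step I would either cite to \cite{milnor1976curvatures, alexandrino2015lie, gallier2020differential} or prove carefully, is the passage from the algebra to the group — namely Weyl's theorem that a semisimple Lie algebra of compact type integrates to a \emph{compact} simply connected group, together with the bookkeeping needed to put $\widetilde G/\Gamma$ into the advertised normal form.
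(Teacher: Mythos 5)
Your proof is correct, and while your ``if'' direction is essentially the paper's (both are Haar-averaging arguments: you average an inner product over the compact subgroup $\mathrm{Ad}(G)\subset GL(\mathfrak g)$, which is the route of Theorem~\ref{thm:sternberg} and Corollary~\ref{cor:compactclosure}, whereas the paper's official proof averages a metric over $G$ itself via Lemma~\ref{lemma:compactliegroup}; the two are interchangeable), your ``only if'' direction genuinely diverges. You split $\mathfrak g=\mathfrak z(\mathfrak g)\oplus[\mathfrak g,\mathfrak g]$ and obtain negative definiteness of the Killing form on the derived algebra from the one-line identity $\kappa(X,X)=\mathrm{tr}(\mathrm{ad}_X^2)=-\|\mathrm{ad}_X\|^2\le 0$, with equality exactly on the centre, and then cite Weyl's theorem to integrate the compact-type semisimple factor to a compact simply connected group. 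The paper instead decomposes $\mathfrak g$ into orthogonal simple ideals (Lemma~\ref{lemma:liealgebradecomp}), establishes $\mathrm{Ric}=-\tfrac14\kappa$ for the bi-invariant metric in \eqref{eq:ricciclg}, and applies Bonnet--Myers to each noncommutative simple factor --- which is precisely the standard proof of the Weyl theorem you cite, so the step you flag as ``the real content'' is exactly the step the paper chooses to prove rather than quote. Your algebraic preliminaries are arguably cleaner: the centre/derived-algebra split avoids the induction on ideals, and the trace identity gives definiteness of $\kappa$ directly instead of through curvature. What the paper's detour buys is that the formulas \eqref{eq:covariantclg}--\eqref{eq:ricciclg} are not single-use --- they are reused for geodesics, the Eells--Elworthy--Malliavin construction in Section~\ref{sec:eem}, and the mechanics in Section~\ref{sec:mechanics}. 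One small point in your final bookkeeping: the lattice to split off is the \emph{image} $p(\Gamma)$ of $\Gamma$ under projection to $\mathbb R^{\dim\mathfrak z(\mathfrak g)}$ (as the paper does), rather than $\Gamma\cap\mathbb R^{\dim\mathfrak z(\mathfrak g)}$; your choice still works, since $\Gamma/(\Gamma\cap\mathbb R^{\dim\mathfrak z(\mathfrak g)})$ embeds in the finite group $Z(\widetilde K)$ so the two lattices span the same subspace and differ by finite index, but that observation should be made explicit.
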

Recall that an abelian group is a commutative group. The proof of this theorem uses many concepts and results that we use also in the development of stochastic differential equations and their analytical or numerical solution later in this work. Hence, we repeat the proof not only for completeness, but also as a means to introduce the concepts of Lie theory and Riemannian geometry on which we will later rely. The first step of the proof of Theorem \ref{thm:Milnor} is to show the if direction, which requires results on unimodular and compact Lie groups that we discuss in the next section.

\subsection{Unimodular and compact Lie groups}\label{sec:compactliegroups}
The key concept in this section is the Haar measure. The Haar measure is defined in general as follows.
\begin{definition}
    Let $G$ be a locally compact topological group. The left and right Haar measures on $G$ are non-negative measures $\mu_L$ and $\mu_R$, respectively, on the Borel sets on $G$ which satisfy the following invariance property: for any Borel set $A\subset G$ and any $g\in G$, we have
    \begin{equation}
        \mu_L(L_gA) = \mu_L(A), \qquad \mu_R(R_gA)=\mu_R(A).
    \end{equation}
\end{definition}
In \cite{federer1969geometric}, one can find the following important results. The left and right Haar measures are constant multiples of one another. Groups for which the left and right Haar measures coincide are called unimodular groups. Unimodularity is necessary for the existence of bi-invariant measures. Given Lie group structure rather than topological structure, one considers unimodular Lie groups. In this work, we focus on bi-invariant metrics, which leads us to consider compact Lie groups, abelian Lie groups and their direct product. 

Compact Lie groups allow for analytical approaches to study their geometry. Integration on general smooth manifolds is understood in the Riemannan-Stieltjes sense. The Haar measure extends Riemann-Stieltjes integration to Lebesgue integration on compact Lie groups. An explicit construction of the Haar measure uses a left-invariant volume form on a Lie group.

A volume form $\omega$ on a Lie group $G$ is called left-invariant if $L_g^*\,\omega = \omega$ and right-invariant if $R_g^*\,\omega = \omega$. On any Lie group there is, up to scalar multiplication, a unique left-invariant volume form. This follows from the fact that the dimension of the space of volume forms at the identity in the Lie group is equal to one, since then up to multiplication by a nonzero scalar, there is a unique choice of $\omega_e\in \Omega^n(G)_e$. Defining $\omega_g = L^*_{g^{-1}}\omega_e$ then gives us a left-invariant volume form on $G$. If $G$ is compact then, up to multiplication by $\pm 1$, there is a unique left-invariant volume form $\omega$ on $G$ such that $\int_G \omega=1$. To see this, recall that replacing the volume form $\omega$ by $c\omega$ for some nonzero scalar $c$ multiplies the value of the integral by $|c|$. Since $G$ is compact, $\int_G \omega$ is finite with respect to the volume form. Hence there is a unique $c$ up to a multiplication by $\pm 1$, so that $\int_G \omega=1$ with respect to the volume form. 

Given a compact Lie group $G$, let $\omega$ be a left-invariant volume form that is normalised so that $\int_G \omega=1$. For any $f\in C(G)$, define
\begin{equation}
    \int_G f\mu(dg) =\int_Gf|\omega|
\end{equation}
with respect to the orientation given by $\omega$. By means of the Riesz representation theorem, $|\omega|$ is the completion of the volume form to a Borel measure on $G$. This measure $\mu$ is the Haar measure on a compact Lie group and has several important properties.

\begin{lemma}
    Let $G$ be compact. The Haar measure $\mu$ is left-invariant, right-invariant and invariant under inversion:
    \begin{equation}
        \int_Gf(hg)\mu(dg) = \int_Gf(gh)\mu(dg) = \int_Gf(g^{-1})\mu(dg) =\int_Gf(g)\mu(dg)
    \end{equation}
    for any $h\in G$ and $f$ any Borel measurable function on $G$. 
\end{lemma}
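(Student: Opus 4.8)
The plan is to reduce all three identities to the change-of-variables formula for diffeomorphisms of $G$, applied to the density $|\omega|$ that defines $\mu$, treating left-invariance, right-invariance, and inversion-invariance in that order. The one structural input used repeatedly is that the space of top-degree forms at a point of $G$ is one-dimensional, so that every left-invariant volume form (or density) is a scalar multiple of $\omega$ (resp.\ $|\omega|$). Recall also the general substitution rule $\int_G (f\circ\Phi)\,|\omega| = \int_G f\,(\Phi^{-1})^*|\omega|$ for a diffeomorphism $\Phi\colon G\to G$.

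Left-invariance is then immediate from the construction: for $h\in G$, taking $\Phi=L_h$ gives $\int_G (f\circ L_h)\,|\omega| = \int_G f\,(L_{h^{-1}})^*|\omega|$, and since $\omega$ was chosen left-invariant this equals $\int_G f\,|\omega|$, i.e. $\int_G f(hg)\,\mu(dg) = \int_G f(g)\,\mu(dg)$.

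The substantive step is right-invariance, which is precisely the statement that a compact Lie group is unimodular. Because left and right translations commute, $L_g^*(R_h^*\omega) = R_h^*(L_g^*\omega) = R_h^*\omega$, so $R_h^*\omega$ is again a left-invariant volume form; by one-dimensionality, $R_h^*\omega = \Delta(h)\,\omega$ for a unique nonzero scalar $\Delta(h)$, hence $R_h^*|\omega| = |\Delta(h)|\,|\omega|$. From $R_{h_1h_2}^* = R_{h_2}^*R_{h_1}^*$ one reads off that $|\Delta|\colon G\to(\mathbb{R}_{>0},\times)$ is a continuous group homomorphism, so its image is a compact subgroup of $\mathbb{R}_{>0}$; since $\{1\}$ is the only such subgroup, $|\Delta|\equiv 1$. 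Thus $R_h^*|\omega| = |\omega|$, and the substitution rule with $\Phi = R_h$ yields $\int_G f(gh)\,\mu(dg) = \int_G f(g)\,\mu(dg)$. (If $G$ is connected one obtains the stronger form-level identity $R_h^*\omega = \omega$, but only the density-level statement is needed here.)

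For invariance under inversion, write $\iota(g)=g^{-1}$ and use $\iota\circ L_h = R_{h^{-1}}\circ\iota$, which holds since $(hg)^{-1}=g^{-1}h^{-1}$. Pulling back $\omega$ gives $L_h^*(\iota^*\omega) = \iota^*(R_{h^{-1}}^*\omega)$, whose associated density equals $\iota^*|\omega|$ by the right-invariance just established; hence $|\iota^*\omega|$ is a left-invariant density and therefore $|\iota^*\omega| = c\,|\omega|$ for some $c>0$. Applying $\iota^*$ once more and using $\iota^2=\mathrm{id}$ forces $c^2=1$, so $c=1$, i.e. $\iota^*|\omega| = |\omega|$; equivalently $\int_G f(g^{-1})\,\mu(dg) = \int_G f(g)\,\mu(dg)$. (Alternatively one may evaluate at the identity, where $d\iota_e = -\mathrm{id}_{\mathfrak{g}}$ gives $\iota^*\omega = (-1)^{\dim G}\omega$, the sign washing out on passing to $|\omega|$.) I expect the only genuine obstacle to be the unimodularity step: once compactness is used to trivialise the modular homomorphism $|\Delta|$, the remaining two identities follow formally from change of variables together with the commutation relations among $L_h$, $R_h$, and $\iota$.
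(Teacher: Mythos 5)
Your proof is correct. Note that the paper does not actually prove this lemma --- it states it and defers to Sepanski's \emph{Compact Lie Groups} --- so there is no internal argument to compare against; what you have written is the standard self-contained proof, and it fits naturally with the surrounding text, which constructs $\mu$ from a normalised left-invariant volume form $\omega$ and separately discusses unimodularity. Your decomposition is the right one: left-invariance is built into the construction; right-invariance is exactly unimodularity, which you obtain by observing that $R_h^*\omega$ is again left-invariant, hence a scalar multiple $\Delta(h)\,\omega$ of $\omega$ by one-dimensionality of left-invariant top forms, and that the continuous homomorphism $|\Delta|\colon G\to(\mathbb{R}_{>0},\times)$ must have image $\{1\}$ by compactness; inversion-invariance then follows from $\iota\circ L_h = R_{h^{-1}}\circ\iota$ together with the involution trick $c^2=1$, $c>0$. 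One cosmetic slip: since $R_{h_1h_2}=R_{h_2}\circ R_{h_1}$, the pullbacks compose as $R_{h_1h_2}^*=R_{h_1}^*\circ R_{h_2}^*$ rather than in the order you wrote; this does not affect the conclusion $\Delta(h_1h_2)=\Delta(h_1)\Delta(h_2)$ because the scalars commute. The only other caveat is the implicit identification $\Phi^*|\omega|=|\Phi^*\omega|$ for a diffeomorphism $\Phi$, which is standard for densities and is exactly what lets you pass from the form-level identity $R_h^*\omega=\Delta(h)\omega$ to the measure-level statement; you handle this correctly throughout.
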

For a proof we refer to \cite{sepanski2007compact}. The Haar measure is used to obtain bi-invariant metrics from general Riemannian metrics through averaging, which works as follows. Suppose that $\omega$ is a left-invariant volume form on $G$. To make it bi-invariant, we can define a new volume form $\widetilde{\omega}$ by averaging $\omega$ over the group using the Haar measure
\begin{equation}
    \widetilde{\omega} = \int_GR_g^*\omega \,\mu(dg).
\end{equation}
In a similar manner, a given Riemannian metric on $G$ that is not bi-invariant can be averaged to make it bi-invariant. The following lemma shows that it is straightforward to obtain bi-invariant metrics on compact Lie groups.
\begin{lemma}\label{lemma:compactliegroup}
    A compact Lie group $G$ admits a bi-invariant metric.
\end{lemma}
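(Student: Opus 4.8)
The plan is to start from an arbitrary inner product on the Lie algebra, average it over the group against the Haar measure to obtain an $\mathrm{Ad}$-invariant inner product on $\mathfrak{g}$, and then invoke Lemma \ref{lemma:adinvariantgroup} to promote the associated left-invariant metric to a bi-invariant one. Concretely, fix any inner product $\langle\,\cdot\,,\,\cdot\,\rangle_0$ on $\mathfrak{g}\simeq T_eG$ -- for instance the restriction of the Hilbert--Schmidt pairing $\langle\,\cdot\,,\,\cdot\,\rangle_{HS}$ in the matrix case -- and define, for $X,Y\in\mathfrak{g}$,
\begin{equation}
    Q_e(X,Y) := \int_G \langle \mathrm{Ad}_g X,\, \mathrm{Ad}_g Y\rangle_0\, \mu(dg),
\end{equation}
where $\mu$ is the normalised Haar measure constructed above. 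Since $g\mapsto \mathrm{Ad}_g$ is smooth and $G$ is compact, the integrand is continuous and bounded, so the integral is well defined.

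Next I would check that $Q_e$ is a genuine inner product on $\mathfrak{g}$: bilinearity and symmetry are inherited from $\langle\,\cdot\,,\,\cdot\,\rangle_0$ together with linearity of the integral, and positive-definiteness follows because for $X\neq 0$ the map $g\mapsto \langle \mathrm{Ad}_g X, \mathrm{Ad}_g X\rangle_0$ is continuous and strictly positive (each $\mathrm{Ad}_g$ being invertible), hence has strictly positive integral against $\mu$, which has full support and finite total mass by compactness. The crucial computation is then $\mathrm{Ad}$-invariance: for any $h\in G$,
\begin{equation}
    Q_e(\mathrm{Ad}_h X, \mathrm{Ad}_h Y) = \int_G \langle \mathrm{Ad}_{gh} X, \mathrm{Ad}_{gh} Y\rangle_0\, \mu(dg) = \int_G \langle \mathrm{Ad}_{g} X, \mathrm{Ad}_{g} Y\rangle_0\, \mu(dg) = Q_e(X,Y),
\end{equation}
using the homomorphism property $\mathrm{Ad}_g\mathrm{Ad}_h=\mathrm{Ad}_{gh}$ and the right-invariance of the Haar measure.

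Finally, extend $Q_e$ to a left-invariant Riemannian metric $Q$ on $G$ by the standard prescription $Q_g(X,Y):=Q_e\big(d(L_{g^{-1}})_g X,\, d(L_{g^{-1}})_g Y\big)$ recalled before the definition of bi-invariant metrics. By the previous step $\mathrm{Ad}_g:\mathfrak{g}\to\mathfrak{g}$ is an isometry of $(\mathfrak{g}, Q_e)$ for every $g\in G$, so Lemma \ref{lemma:adinvariantgroup} immediately yields that $Q$ is also right-invariant, hence bi-invariant. (If one additionally knows $G$ is connected, one could instead differentiate the relation $Q_e(\mathrm{Ad}_h X,\mathrm{Ad}_h Y)=Q_e(X,Y)$ at $h=e$ to conclude that $\mathrm{ad}_X$ is $Q_e$-skew-adjoint and appeal to Lemma \ref{lemma:adinvariantalgebra}; but since a compact Lie group need not be connected, the route through Lemma \ref{lemma:adinvariantgroup} is the cleaner one here.)

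The main obstacle is not a single hard step but rather verifying that the averaging construction is legitimate: that the integral converges (requiring compactness for finiteness of $\mu$ and continuity of the integrand in $g$), that positive-definiteness survives the averaging (requiring that $\mu$ has full support, so it cannot concentrate on a null set where the integrand degenerates), and that the invariance of the Haar measure invoked is precisely the right one, namely right-invariance, used to absorb the translation $g\mapsto gh$. None of these points is deep, but each is essential, and compactness enters in an irreducible way: it is exactly what guarantees a finite, bi-invariant reference measure against which to average.
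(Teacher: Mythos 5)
Your proof is correct. It rests on the same averaging-over-Haar-measure idea as the paper's, but it is organised differently: you average an arbitrary inner product on $\mathfrak{g}$ over the adjoint representation, obtaining an $\mathrm{Ad}$-invariant inner product $Q_e$ at the identity, and then invoke Lemma \ref{lemma:adinvariantgroup} to upgrade the associated left-invariant metric to a bi-invariant one. The paper instead averages a right-invariant Riemannian metric over left-translations directly on the whole group, $Q(X,Y)_x=\int_G\langle dL_gX,dL_gY\rangle_{gx}\,\omega$, and verifies left- and right-invariance by two explicit change-of-variables computations, never using Lemma \ref{lemma:adinvariantgroup}. The two constructions actually agree at the identity (right-invariance of the paper's starting metric identifies $\langle dL_gX,dL_gY\rangle_g$ with $\langle\mathrm{Ad}_gX,\mathrm{Ad}_gY\rangle_0$), so the difference is one of bookkeeping: your route localises all the work to the Lie algebra and delegates the group-level invariance to an already-proved lemma, which is cleaner and is precisely the pattern the paper later abstracts into Theorem \ref{thm:sternberg} and Corollary \ref{cor:compactclosure}; the paper's route is self-contained and does not presuppose the equivalence of bi-invariance with $\mathrm{Ad}$-isometry. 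Your parenthetical remark is also well taken: since a compact Lie group need not be connected, passing through Lemma \ref{lemma:adinvariantgroup} rather than the infinitesimal criterion of Lemma \ref{lemma:adinvariantalgebra} is the right choice. One minor simplification: positive-definiteness of $Q_e$ does not really need the full-support property of $\mu$ — a strictly positive continuous function on a compact group is bounded below by a positive constant, so its integral against any probability measure is positive — but invoking full support is not wrong.
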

\begin{proof}
    Let $\omega$ be a right-invariant volume form on $G$ and let $\langle\,\cdot\,,\,\cdot\,\rangle$ denote a right-invariant metric on $G$. We define for all $X,Y\in T_x G$
    \begin{equation}
        Q(X,Y)_x:= \int_G\langle dL_g X, dL_g Y\rangle_{gx} \omega.
    \end{equation}
    We first show that $Q$ is left-invariant. Fix $X,Y\in T_g G$ and consider the function $f:G\to \mathbb{R}$ given by $f:G\to\mathbb{R}$ given by $f(g)=\langle dL_g X, dL_g Y\rangle_{gx}$. A quick computation then shows that
    \begin{equation}
    \begin{aligned}
        Q(dL_hX,dL_hY)_{hx} &= \int_G\langle dL_g(dL_h X), dL_g(dL_hY)\rangle_{g(hx)}\omega = \int_G\langle dL_{gh}X, dL_{gh}Y\rangle_{(gh)x} \omega\\
        &= \int_G f(gh)\omega =\int_G R^*_h(f(g)\omega) = \int_G f(g)\omega = \int_G\langle dL_g X,dL_gY\rangle_{gx} \omega = Q(X,Y)_x
    \end{aligned}
    \end{equation}
    where we have used the right-invariance of the metric as well as the right-invariance of the volume form. This shows that $Q$ is left-invariant. The following computation shows that $Q$ is also right-invariant
    \begin{equation}
        \begin{aligned}
            Q(dR_hX,dR_h Y)_{xh} &= \int_G\langle dL_g(dR_h X), dL_g(dR_h Y)\rangle_{g(xh)}\omega = \int_G\langle dR_h(dL_g X),dR_h(dL_gY)\rangle_{(gx)h}\omega\\
            &= \int_G \langle dL_g X,dL_g Y\rangle_{gx}\omega = Q(X,Y)_x .
        \end{aligned}
    \end{equation}
    This finishes the proof.
\end{proof}

\begin{proof}[Proof of ``if" in Theorem \ref{thm:Milnor}.]
    Any abelian group admits a bi-invariant measure since left- and right-translations are the same. By Lemma \ref{lemma:compactliegroup}, any compact Lie group admits a bi-invariant metric. Hence the direct product between a compact Lie group and an abelian group admits a bi-invariant metric.
\end{proof}

For the converse, additional algebraic insights are required. In the next section, we first discuss Lie groups that admit nondegenerate quadratic forms and proceed to semisimple Lie groups, which provide an algebraic condition for compactness.

\subsection{Quadratic, reductive and semisimple Lie groups}\label{sec:quadraticliegroups}
The goal of this section is twofold. Firstly, we introduce the necessary statements to decide when a structure on a Lie algebra is enough to imply a bi-invariant metric on the Lie group. Secondly, we discuss the Killing form and its role in determining whether a Lie group is compact. Before discussing some of the relevant notions regarding semisimplicity, we first introduce quadratic Lie algebras. Quadratic Lie algebras were first introduced in \cite{tsou1957xix}, under the name metrisable Lie algebras. This is precisely the setting for the present work. 
\begin{definition}
    A Lie algebra $\mathfrak{g}$ is called quadratic if it admits a nondegenerate symmetric bilinear form $\varphi:\mathfrak{g}\times\mathfrak{g}\to\mathbb{R}$ that is invariant under the adjoint action. That is, for all $X,Y,Z\in\mathfrak{g}$, we have
    \begin{equation}
        \varphi([X,Y],Z) + \varphi(Y,[X,Z]) = 0.
    \end{equation}
    A Lie group is called quadratic if its Lie algebra is quadratic.
\end{definition}
The presence of such a nondegenerate symmetric bilinear form implies that $\mathrm{ad}^*_X = -\mathrm{ad}_X$ for all $X\in\mathfrak{g}$. The quadratic Lie algebras constitute the largest class of Lie algebras for which this property holds. Lemma \ref{lemma:adinvariantalgebra} requires a left-invariant metric, which is a stronger condition than a left-invariant bilinear form due to the lack of definiteness. Classifying the quadratic Lie algebras is an open problem, see \cite{ovando2016lie, conti2021uniqueness}. A large class of quadratic Lie algebras are the semisimple ones, which can be characterised by their Killing form, which is defined as follows.

\begin{definition}
    The Killing form on a Lie algebra $\mathfrak{g}$ is the symmetric bilinear form $\kappa:\mathfrak{g}\times\mathfrak{g}\to\mathbb{R}$ defined by
    \begin{equation}
        \kappa(X,Y) = \mathrm{tr}(\mathrm{ad}_X\circ\mathrm{ad}_Y).
    \end{equation}
    A Lie algebra is called semisimple if and only if its Killing form is nondegenerate and a Lie group is called semisimple if its Lie algebra is semisimple.
\end{definition}

For a semisimple Lie algebra, the Killing form is a nondegenerate symmetric bilinear form, hence every semisimple Lie algebra is quadratic. One of the convenient properties of the Killing form on semisimple Lie groups is that it provides a way of checking for compactness, as shown by the following lemma.
\begin{lemma}
    A semisimple connected Lie group $G$ is compact if and only if its Killing form is negative definite.
\end{lemma}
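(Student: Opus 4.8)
The plan is to exploit the fact that, on a semisimple Lie algebra, the Killing form $\kappa$ is nondegenerate and $\mathrm{ad}$-invariant, so $-\kappa$ is a candidate for an inner product on $\mathfrak{g}$ precisely when $\kappa$ is negative definite. First I would prove the ``only if'' direction. Suppose $G$ is compact. By Lemma \ref{lemma:compactliegroup}, $G$ admits a bi-invariant metric $Q$; equivalently, there is an $\mathrm{Ad}$-invariant inner product on $\mathfrak{g}$, and by Lemma \ref{lemma:adinvariantalgebra} every $\mathrm{ad}_X$ is skew-adjoint with respect to it. For a skew-adjoint operator on a real inner product space, $\mathrm{ad}_X \circ \mathrm{ad}_X = -\,\mathrm{ad}_X^\top \mathrm{ad}_X$ is negative semidefinite, so $\kappa(X,X) = \mathrm{tr}(\mathrm{ad}_X \circ \mathrm{ad}_X) \le 0$, with equality only if $\mathrm{ad}_X = 0$, i.e.\ $X$ is in the centre of $\mathfrak{g}$. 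Since $G$ is semisimple, its centre is trivial, so $\kappa(X,X) < 0$ for all $X \ne 0$, proving $\kappa$ is negative definite.

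Next I would prove the ``if'' direction: assume $\kappa$ is negative definite. Then $Q(X,Y) := -\kappa(X,Y)$ defines an inner product on $\mathfrak{g}$, and by $\mathrm{ad}$-invariance of the Killing form (valid on any Lie algebra, via the trace identity $\mathrm{tr}(\mathrm{ad}_{[X,Y]}\mathrm{ad}_Z) = \mathrm{tr}(\mathrm{ad}_X[\mathrm{ad}_Y,\mathrm{ad}_Z])$ and cyclicity of the trace), each $\mathrm{ad}_X$ is skew-adjoint for $Q$. By Lemma \ref{lemma:adinvariantalgebra} the left-invariant metric induced by $Q$ is bi-invariant, so $\mathrm{Ad}(G)$ is a subgroup of the orthogonal group $O(\mathfrak{g}, Q)$, which is compact; hence the closure $\overline{\mathrm{Ad}(G)}$ is a compact Lie group. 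Because $G$ is semisimple its centre is discrete and the adjoint map $\mathrm{Ad}: G \to \mathrm{Ad}(G)$ has discrete kernel, so $G$ is a covering of $\mathrm{Ad}(G)$; to conclude compactness of $G$ itself one invokes the standard fact (Weyl's theorem) that a connected semisimple Lie group with negative definite Killing form has finite fundamental group, so the universal cover, and hence $G$, is compact. Alternatively, one argues that $\mathrm{Ad}(G)$ is already closed in $O(\mathfrak{g})$ (it is the identity component of $\mathrm{Aut}(\mathfrak{g})$, a closed subgroup) and then that the compactness of $\mathrm{Ad}(G)$ forces compactness of $G$ via the finiteness of $\pi_1$.

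The main obstacle is the last step of the ``if'' direction: passing from compactness of the adjoint group $\mathrm{Ad}(G)$ to compactness of $G$. Compactness of $\mathrm{Ad}(G) \cong G/Z(G)$ is immediate from the bi-invariant Riemannian structure, but $Z(G)$ could a priori be infinite, in which case $G$ would not be compact. The resolution is Weyl's theorem on the finiteness of the fundamental group of a compact semisimple Lie group (equivalently, a completeness-plus-Ricci-curvature argument: the bi-invariant metric on $G$ has strictly positive Ricci curvature when $\kappa < 0$, so by Bonnet–Myers $G$ is compact with finite fundamental group). I would state this carefully, either citing \cite{milnor1976curvatures} for the curvature computation and the Bonnet–Myers conclusion, or citing a standard reference for Weyl's theorem, since a fully self-contained proof requires either the structure theory of semisimple Lie algebras or the Myers diameter estimate, both of which are beyond what is needed elsewhere.
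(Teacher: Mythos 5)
Your proof is correct. The paper itself does not prove this lemma --- it defers to a reference --- so there is no in-text argument to compare against; but the ingredients you use are precisely the ones the paper assembles for the ``only if'' half of Theorem \ref{thm:Milnor}: skew-adjointness of $\mathrm{ad}_X$ with respect to an $\mathrm{Ad}$-invariant inner product (Lemmas \ref{lemma:compactliegroup} and \ref{lemma:adinvariantalgebra}) for the forward direction, and the identity $\mathrm{Ric}=-\tfrac{1}{4}\kappa$ of \eqref{eq:ricciclg} together with Bonnet--Myers for the converse. Your ``only if'' argument ($\mathrm{tr}(\mathrm{ad}_X\circ\mathrm{ad}_X)=-\|\mathrm{ad}_X\|_F^2\le 0$, with equality forcing $X\in Z(\mathfrak{g})=0$ by semisimplicity) is complete. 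For the ``if'' direction you correctly isolate the genuine difficulty --- compactness of $\mathrm{Ad}(G)\cong G/Z(G)$ does not by itself give compactness of $G$, since the centre could a priori be infinite --- and you resolve it with Bonnet--Myers (equivalently Weyl's finiteness theorem). The one step you should make explicit rather than leave implicit is completeness of $(G,-\kappa)$, which Bonnet--Myers requires as a hypothesis: you cannot borrow it from Theorem \ref{thm:lieexp} or from Hopf--Rinow as used in the paper, since both presuppose compactness there and that would be circular. It follows instead from the fact that geodesics of a bi-invariant metric through any point are translates of one-parameter subgroups, hence defined for all $t\in\mathbb{R}$. With that sentence added, the argument is a clean, self-contained proof consistent with the paper's toolkit.
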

For the proof we refer to \cite{alexandrino2015lie}. It is now clear that compactness is sufficient for the existence of bi-invariant metrics and semisimple connected Lie groups provide an algebraic condition for compactness but, as Theorem \ref{thm:Milnor} indicates, a slightly more general setting is possible.

From Lemma \ref{lemma:adinvariantgroup}, we have that adjoint action of the group $G$ is an element of the orthogonal group $O(\mathfrak{g})$ defined over the Lie algebra $\mathfrak{g}$, i.e., $\mathrm{Ad}(G) = \{\mathrm{Ad}_g\,|\, \forall g\in G\}\subset O(\mathfrak{g})$. Since the orthogonal group is compact, the subgroup $\mathrm{Ad}(G)$ has compact closure in $O(\mathfrak{g})$. Using the averaging trick that was used in the proof of Lemma \ref{lemma:compactliegroup}, it can be shown that this condition is also sufficient. By applying the following theorem (Theorem 5.2 in \cite{sternberg1999lectures}) to the adjoint representation, Lemma \ref{lemma:compactliegroup} is generalised.
\begin{theorem}\label{thm:sternberg}
    Let $\rho:G\to GL(V)$ be a finite-dimensional representation of a Lie group $G$. There is a $G$-invariant inner product on the vector space $V$ if and only if $\rho(G)$ has compact closure in $O(V)$.
\end{theorem}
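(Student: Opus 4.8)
The plan is to prove the two implications separately; the forward one is immediate, and the reverse one is an averaging argument using the Haar measure on a compact group, entirely parallel to the averaging trick already used in the proof of Lemma \ref{lemma:compactliegroup}.

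For the ``only if'' direction, suppose $\langle\,\cdot\,,\,\cdot\,\rangle$ is a $G$-invariant inner product on $V$. By definition $\rho(g)$ is then an isometry of $(V,\langle\,\cdot\,,\,\cdot\,\rangle)$ for every $g\in G$, so $\rho(G)$ is a subgroup of the orthogonal group $O(V)$ of this inner product. Since $O(V)$ is compact and closed in $GL(V)$, the closure $\overline{\rho(G)}$ is a closed subset of a compact space and is therefore itself compact; thus $\rho(G)$ has compact closure in $O(V)$.

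For the ``if'' direction, set $K:=\overline{\rho(G)}$ and assume $K$ is compact. First observe that $K$ is a subgroup of $GL(V)$, since the closure of a subgroup of a topological group is again a subgroup (multiplication and inversion being continuous). Being a closed, and hence compact, subgroup of $GL(V)$, the group $K$ is a compact Lie group and therefore carries a normalised bi-invariant Haar measure $\mu$ with $\mu(K)=1$, exactly as discussed in Section \ref{sec:compactliegroups}. Choose any inner product $\langle\,\cdot\,,\,\cdot\,\rangle_0$ on $V$ and, writing $k\cdot u$ for the action of $k\in K\subseteq GL(V)$ on $u\in V$, define
\begin{equation}
    \langle u,v\rangle := \int_K \langle k\cdot u,\, k\cdot v\rangle_0\,\mu(dk).
\end{equation}
This integral converges because $K$ is compact and the integrand is continuous in $k$; it is manifestly bilinear and symmetric, and it is positive definite because $\langle k\cdot u,\, k\cdot u\rangle_0>0$ for every $k\in K$ whenever $u\neq 0$ (each $k$ is invertible) while $\mu$ is a positive measure of total mass one. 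To verify $G$-invariance, fix $g\in G$ and $u,v\in V$; since $\rho(g)\in\rho(G)\subseteq K$, the map $k\mapsto k\rho(g)$ is a bijection of $K$ preserving $\mu$ by right-invariance of the Haar measure, so the substitution $k'=k\rho(g)$ yields
\begin{equation}
    \langle \rho(g)\cdot u,\, \rho(g)\cdot v\rangle = \int_K \langle k\rho(g)\cdot u,\, k\rho(g)\cdot v\rangle_0\,\mu(dk) = \int_K \langle k'\cdot u,\, k'\cdot v\rangle_0\,\mu(dk') = \langle u,v\rangle.
\end{equation}
Hence $\langle\,\cdot\,,\,\cdot\,\rangle$ is a $G$-invariant inner product on $V$.

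The only genuine subtlety I anticipate is the passage from ``$\rho(G)$ has compact closure'' to ``there is a compact group over which we may integrate'': one must note that $K=\overline{\rho(G)}$ is actually a subgroup, so that the phrase ``Haar measure on $K$'' is meaningful, and that a compact subgroup of $GL(V)$ is a compact Lie group admitting a finite bi-invariant Haar measure. Everything else — convergence, bilinearity, positivity, and the change of variables computing invariance — is routine. If in addition one wants $\rho(G)$ to lie in the orthogonal group of a \emph{prescribed} reference inner product, one further conjugates $K$ by a suitable element of $GL(V)$ diagonalising the Gram matrix of $\langle\,\cdot\,,\,\cdot\,\rangle$ against that reference, but this refinement is not needed for the application to the adjoint representation $\mathrm{Ad}:G\to GL(\mathfrak{g})$.
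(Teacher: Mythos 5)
Your proof is correct and follows exactly the route the paper indicates: it cites Sternberg for this result and notes only that "the proof uses the averaging trick that was also used in the proof of Lemma \ref{lemma:compactliegroup}," which is precisely your Haar-measure average over the compact closure $K=\overline{\rho(G)}$, together with the routine verifications that $K$ is a compact subgroup and that the averaged form is a $G$-invariant inner product. Your closing remark about conjugating into the orthogonal group of a prescribed reference inner product also sensibly resolves the mild imprecision in the statement's phrase "compact closure in $O(V)$," which strictly speaking presupposes an inner product.
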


\begin{corollary}\label{cor:compactclosure}
    For any Lie group $G$, an inner product on $\mathfrak{g}$ induces a bi-invariant metric on $G$ if and only if $\mathrm{Ad}(G)$ has compact closure in $O(\mathfrak{g})$.
\end{corollary}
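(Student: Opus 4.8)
The plan is to deduce this directly from Lemma \ref{lemma:adinvariantgroup} together with Theorem \ref{thm:sternberg} applied to the adjoint representation $\rho = \mathrm{Ad}\colon G \to GL(\mathfrak{g})$ with $V = \mathfrak{g}$. The whole point of the corollary is to translate between the notion of an \emph{$\mathrm{Ad}$-invariant inner product}, which Sternberg's theorem characterizes, and that of an \emph{inner product inducing a bi-invariant metric}, which Lemma \ref{lemma:adinvariantgroup} characterizes. So I expect the argument to be short, with the only care needed in stating the equivalence precisely.

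First I would establish the forward implication. Let $\langle\,\cdot\,,\,\cdot\,\rangle$ be an inner product on $\mathfrak{g}$ and let $Q$ be the left-invariant metric it induces, i.e. $Q_g(X,Y) = \langle d(L_{g^{-1}})_g X, d(L_{g^{-1}})_g Y\rangle$, and assume $Q$ is bi-invariant. Since $Q$ is then in particular right-invariant, Lemma \ref{lemma:adinvariantgroup} gives that $\mathrm{Ad}_g\colon \mathfrak{g} \to \mathfrak{g}$ is an isometry of $(\mathfrak{g}, \langle\,\cdot\,,\,\cdot\,\rangle)$ for every $g \in G$; that is, $\mathrm{Ad}(G) \subseteq O(\mathfrak{g})$, where $O(\mathfrak{g})$ is the orthogonal group of this inner product. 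As $\mathfrak{g}$ is finite-dimensional and the form is positive definite, $O(\mathfrak{g})$ is compact, being closed and bounded in $\mathrm{End}(\mathfrak{g})$, so $\overline{\mathrm{Ad}(G)}$, a closed subset of $O(\mathfrak{g})$, is compact; hence $\mathrm{Ad}(G)$ has compact closure in $O(\mathfrak{g})$.

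Next I would prove the converse. Suppose $\mathrm{Ad}(G)$ has compact closure in $GL(\mathfrak{g})$, equivalently inside $O(\mathfrak{g})$ for some inner product on $\mathfrak{g}$. Theorem \ref{thm:sternberg}, applied to $\rho = \mathrm{Ad}$ and $V = \mathfrak{g}$, then yields a $G$-invariant inner product $\langle\,\cdot\,,\,\cdot\,\rangle$ on $\mathfrak{g}$, that is, one satisfying $\langle \mathrm{Ad}_g X, \mathrm{Ad}_g Y\rangle = \langle X,Y\rangle$ for all $g \in G$ and $X, Y \in \mathfrak{g}$. Thus each $\mathrm{Ad}_g$ is an isometry of $(\mathfrak{g}, \langle\,\cdot\,,\,\cdot\,\rangle)$, and Lemma \ref{lemma:adinvariantgroup} then shows that the left-invariant metric on $G$ induced by $\langle\,\cdot\,,\,\cdot\,\rangle$ is also right-invariant, hence bi-invariant. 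This is the desired inner product inducing a bi-invariant metric.

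The one point I would be careful about is that the orthogonal group $O(\mathfrak{g})$ is not canonical — it depends on the choice of inner product — so the statement is best read as the equivalence between ``some inner product on $\mathfrak{g}$ induces a bi-invariant metric on $G$'' and ``$\mathrm{Ad}(G)$ has compact closure in $GL(\mathfrak{g})$'', the latter being the same as saying that $\mathrm{Ad}(G)$ is contained in $O(\mathfrak{g})$ for a suitable inner product. I would include a sentence recalling that a subgroup of $GL(\mathfrak{g})$ has compact closure exactly when it preserves some inner product — which is precisely the averaging argument underlying Theorem \ref{thm:sternberg} — so that the two formulations agree. There is no real obstacle here; the content is entirely contained in the two cited results, and the proof is essentially bookkeeping.
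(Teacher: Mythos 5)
Your proof is correct and takes essentially the same route the paper intends: the forward direction follows from Lemma \ref{lemma:adinvariantgroup} together with the compactness of $O(\mathfrak{g})$, and the converse from Theorem \ref{thm:sternberg} applied to the adjoint representation, then Lemma \ref{lemma:adinvariantgroup} again. Your closing remark that $O(\mathfrak{g})$ depends on the choice of inner product, so the statement should be read as an equivalence with compact closure in $GL(\mathfrak{g})$, is a worthwhile clarification that the paper leaves implicit.
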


The proof of Theorem \ref{thm:sternberg} uses the averaging trick that was also used in the proof of Lemma \ref{lemma:compactliegroup}. It immediately follows that if $G$ is itself compact then $\mathrm{Ad}(G)$ has compact closure, since the adjoint representation is then the image of a compact set by a continuous mapping. For abelian Lie groups, the result is also immediate since the adjoint action reduces to the identity. In general, one sees that the existence of bi-invariant metrics on Lie groups is determined by the properties of the adjoint representation. One may define reductive Lie groups precisely as Lie groups that satisfy Corollary \ref{cor:compactclosure}. This explains the intuition behind Theorem \ref{thm:Milnor}. In \cite{arsigny2006bi}, the result above is used to elegantly show that the special Euclidean group $SE(n)$, being the semidirect product of a compact Lie group and an abelian group, admits no bi-invariant metric for $n\geq 2$, since the adjoint representation does not have compact closure. 

To complete the proof of Theorem \ref{thm:Milnor}, it needs to be shown that the universal covering group of a connected Lie group with a bi-invariant metric splits into the direct product of a compact Lie group and an abelian group. The universal covering group shows up to avoid putting the simply-connected restriction on the Lie group. To show that the universal covering group indeed splits as described, we first discuss several important properties of bi-invariant metrics.

\subsection{Properties of bi-invariant metrics}
In this section we discuss a number of important algebraic and geometric properties of bi-invariant metrics. 
\subsubsection{Algebraic properties}
An ideal $I$ of a Lie algebra $\mathfrak{g}$ is a Lie subalgebra $I\subset\mathfrak{g}$ that is closed under the Lie bracket, i.e., $[I,\mathfrak{g}]\subseteq I$. A Lie algebra is called simple if it contains no ideals other than the trivial ideal and itself. It immediately follows that any simple abelian ideal is one-dimensional. The center of a Lie algebra $Z(\mathfrak{g})$ is the subset $Z(\mathfrak{g})\subset\mathfrak{g}$ that contains elements of the Lie algebra that commute with every other element, i.e., $Z(\mathfrak{g}) = \{ X\in \mathfrak{g}\,|\, [X,\mathfrak{g}] = 0\}$.

\begin{lemma}\label{lemma:liealgebradecomp}
If a Lie algebra is equipped with a bi-invariant metric, then the orthogonal complement of any ideal is also an ideal, and the Lie algebra decomposes as the direct sum of orthogonal simple ideals.
\end{lemma}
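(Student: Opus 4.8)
The plan is to prove the statement in two parts: first, that the orthogonal complement of an ideal is an ideal; second, that iterating this observation yields the claimed decomposition into orthogonal simple ideals. Throughout, write $Q(\cdot,\cdot)$ for the bi-invariant metric on $\mathfrak{g}$, and recall from Lemma \ref{lemma:adinvariantalgebra} (and the discussion of quadratic Lie algebras) that bi-invariance is equivalent to the infinitesimal invariance identity $Q([X,Y],Z) + Q(Y,[X,Z]) = 0$ for all $X,Y,Z\in\mathfrak{g}$, i.e.\ each $\mathrm{ad}_X$ is skew-adjoint with respect to $Q$.

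For the first part, let $I\subseteq\mathfrak{g}$ be an ideal and let $I^\perp$ be its orthogonal complement with respect to $Q$; since $Q$ is nondegenerate (being a metric), $\mathfrak{g} = I\oplus I^\perp$ as vector spaces. To see $I^\perp$ is an ideal, take $Y\in I^\perp$ and $X\in\mathfrak{g}$; I must show $[X,Y]\in I^\perp$, i.e.\ $Q([X,Y],Z)=0$ for all $Z\in I$. Using skew-adjointness, $Q([X,Y],Z) = -Q(Y,[X,Z])$, and $[X,Z]\in I$ because $I$ is an ideal, so $Q(Y,[X,Z])=0$ since $Y\in I^\perp$. Hence $[X,Y]\in I^\perp$, proving $I^\perp$ is an ideal.

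For the second part, I proceed by induction on $\dim\mathfrak{g}$. If $\mathfrak{g}$ is simple (or one-dimensional, or zero), we are done. Otherwise $\mathfrak{g}$ has a nontrivial proper ideal; among all such, choose one of minimal dimension, call it $I$ — minimality forces $I$ to be simple, since any nonzero proper ideal of $I$ would, one checks, also be an ideal of $\mathfrak{g}$ (using that $[I,\mathfrak{g}]\subseteq I$ together with the bracket structure). By the first part, $\mathfrak{g} = I\oplus I^\perp$ with $I^\perp$ an ideal. One should then verify that $[I, I^\perp]\subseteq I\cap I^\perp = 0$, so the decomposition is a Lie algebra direct sum, and that the restriction of $Q$ to $I^\perp$ is again nondegenerate (so that $I^\perp$ carries a bi-invariant metric in its own right): indeed, if $W\in I^\perp$ were $Q$-orthogonal to all of $I^\perp$, then since it is automatically $Q$-orthogonal to $I$ it would lie in the radical of $Q$ on $\mathfrak{g}$, hence be zero. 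Applying the induction hypothesis to $I^\perp$ decomposes it into orthogonal simple ideals, and adjoining $I$ completes the decomposition of $\mathfrak{g}$.

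The main obstacle is the bookkeeping in the inductive step — specifically, the claim that a minimal nontrivial ideal $I$ is simple and that ideals of $I^\perp$ (as a Lie algebra) are ideals of $\mathfrak{g}$. The subtlety is that an ideal of a summand need not a priori be an ideal of the whole, but here it works because $\mathfrak{g} = I\oplus I^\perp$ is a direct sum of ideals with $[I,I^\perp]=0$, so brackets from $\mathfrak{g}$ act on $I^\perp$ through its own bracket. I would state this splitting-compatibility as a small intermediate observation and lean on it for both the simplicity of $I$ and the descent of the induction. One caveat worth flagging: "simple" here is meant in the sense compatible with the abelian case — a one-dimensional ideal is being counted as simple (an abelian simple ideal), as the sentence preceding the lemma makes explicit — so the base case and the classification of $1$-dimensional pieces should be handled with that convention in mind rather than the strict representation-theoretic definition that excludes abelian algebras.
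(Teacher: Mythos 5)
Your proof is correct and follows essentially the same route as the paper: the orthogonal complement of an ideal is shown to be an ideal via the $\mathrm{ad}$-invariance identity $Q([X,Y],Z)=-Q(Y,[X,Z])$, and the decomposition is obtained by iterating. The paper compresses the iteration into a single sentence (``repeating this argument until only simple ideals remain''), whereas you supply the details it leaves implicit --- minimality forcing simplicity, $[I,I^\perp]\subseteq I\cap I^\perp=0$, nondegeneracy of $Q$ on $I^\perp$, and the convention that one-dimensional abelian ideals count as simple --- all of which are correct and worth making explicit.
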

\begin{proof}
    Let $I\subset\mathfrak{g}$ be an ideal and let $Q(\,\cdot\,,\,\cdot\,):\mathfrak{g}\times\mathfrak{g}\to\mathbb{R}$ be the bi-invariant metric. If $Y\in\mathfrak{g}$ is orthogonal to $I$, then $[X,Y]$ should be orthogonal to $I$. For any $Z\in I$, we have $\langle [X,Y],Z\rangle = -\langle Y,[X,Z]\rangle =0$ by the $\mathrm{ad}$-invariance of the metric. Hence $\mathfrak{g}=I\oplus I^\perp$, where $I^\perp$ is the orthogonal complement of $I$. Repeating this argument until only simple ideals remain, the result follows. 
\end{proof}

Thus, by Lemma \ref{lemma:liealgebradecomp}, a Lie algebra with a bi-invariant metric decomposes as
\begin{equation}\label{eq:liealgdecomp}
    \mathfrak{g} = I_1\oplus \cdots \oplus I_n,
\end{equation}
where each $I_i$ is a simple ideal. For every Lie algebra it holds that its Killing form restricted to an ideal is the Killing form on the ideal, which implies that the Killing form on a Lie algebra is the sum of the Killing forms of its ideals. Proofs of these statements can be found in \cite{jacobson1979lie}. Each ideal corresponds to a connected subgroup of the universal covering group and the converse also holds. The simply-connected Lie group $\widetilde{G}$ is the universal cover of $G$ and has $\mathfrak{g}$ as its Lie algebra. Hence, the decomposition of $\mathfrak{g}$ implies a decomposition of $\widetilde{G}$
\begin{equation}
    \widetilde{G} = A_1\times \cdots \times A_n
\end{equation}
where each $A_i$ is a normal subgroup. Now there are two cases. Either $I_i$ is abelian, which means that it is one-dimensional and the normal subgroup $A_i$ is isomorphic to $\mathbb{R}$, or $I_i$ is noncommutative with trivial center. If every ideal is of this second kind, i.e., noncommutative and simple, then the Lie algebra $\mathfrak{g}$ is semisimple. Hence the decomposition \eqref{eq:liealgdecomp} can be formulated as $\mathfrak{g}=\mathfrak{s}\oplus \mathfrak{a}$, where $\mathfrak{s}$ is semisimple (therefore itself the direct sum of simple ideals) and $\mathfrak{a}$ is abelian (itself the direct sum of one-dimensional ideals). It is natural to introduce the following definition at this stage.
\begin{definition}
    A Lie algebra $\mathfrak{g}$ is called reductive if it decomposes as
    \begin{equation}
        \mathfrak{g} = \mathfrak{s}\oplus\mathfrak{a},
    \end{equation}
    where $\mathfrak{s}$ is semisimple and $\mathfrak{a}$ is abelian.
\end{definition}
Note that this is not the Levi decomposition, which states that every Lie algebra can be decomposed into a semisimple part and a solvable radical. Solvable radicals are not necessarily abelian. 

If $I_i$ is noncommutative and simple, the proof of Theorem \ref{thm:Milnor} is complete if the corresponding normal subgroup $A_i$ is compact. To conclude this, a geometric property of bi-invariant metrics is required: the Ricci curvature is strictly positive. We recall this fact in the next section. In addition, we also obtain that $\mathrm{ad}$-invariant inner products on reductive Lie algebras correspond to bi-invariant metrics on reductive Lie groups and this is precisely the setting in which it is possible to obtain such metrics.

\subsubsection{Geometric properties}
Bi-invariant metrics have the following geometric properties.
\begin{lemma}
Let $(G,Q)$ be a Lie group with bi-invariant metric $Q(\,\cdot\,,\,\cdot\,)$, then one has the following explicit formulas for the covariant derivative, the curvature tensor and the sectional curvature
\begin{align}
\nabla_XY &= \frac{1}{2}[X,Y] \label{eq:covariantclg},\\
R(X,Y)Z &= -\frac{1}{4}[[X,Y],Z] \label{eq:curvatureclg},\\
Q(R(X,Y)Y,X) &= \frac{1}{4}\|[X,Y]\|^2 \label{eq:sectionalclg},\\
\mathrm{Ric}(X,Y) &=-\frac{1}{4}\kappa(X,Y). \label{eq:ricciclg}
\end{align}
\end{lemma}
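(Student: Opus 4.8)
The plan is to derive all four formulas from the single computational engine of bi-invariance: by Lemma~\ref{lemma:adinvariantalgebra}, the bi-invariant metric satisfies the $\mathrm{ad}$-invariance identity $Q([X,Z],Y) + Q(Z,[X,Y]) = 0$ for all $X,Y,Z\in\mathfrak g$, and one extends left-invariant vector fields from the Lie algebra so that $[X,Y]$ means the Lie bracket on $\mathfrak g$. First I would establish \eqref{eq:covariantclg}. The Koszul formula for the Levi-Civita connection applied to left-invariant vector fields $X,Y,Z$ collapses because $Q(X,Y)$ etc.\ are constants (so the three derivative terms vanish), leaving $2Q(\nabla_XY,Z) = Q([X,Y],Z) - Q([Y,Z],X) + Q([Z,X],Y)$. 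Using $\mathrm{ad}$-invariance to rewrite $Q([Y,Z],X) = -Q(Z,[Y,X]) = Q(Z,[X,Y]) = Q([X,Y],Z)$ and similarly $Q([Z,X],Y) = -Q([X,Z],Y) = Q(Z,[X,Y]) = Q([X,Y],Z)$ — wait, more carefully: the first substitution gives $Q([Y,Z],X) = -Q(Z,[Y,X])$, and the $[Z,X]$ term also reduces, and after collecting, two of the three terms cancel against each other and one survives, yielding $2Q(\nabla_XY,Z) = Q([X,Y],Z)$ for all $Z$, hence $\nabla_XY = \tfrac12[X,Y]$ by nondegeneracy.

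Next I would use \eqref{eq:covariantclg} to compute the curvature. With $R(X,Y)Z = \nabla_X\nabla_YZ - \nabla_Y\nabla_XZ - \nabla_{[X,Y]}Z$ and the formula $\nabla_XY = \tfrac12[X,Y]$, each term becomes a nested bracket: $\nabla_X\nabla_YZ = \tfrac14[X,[Y,Z]]$, $\nabla_Y\nabla_XZ = \tfrac14[Y,[X,Z]]$, and $\nabla_{[X,Y]}Z = \tfrac12[[X,Y],Z]$. Assembling, $R(X,Y)Z = \tfrac14\big([X,[Y,Z]] - [Y,[X,Z]]\big) - \tfrac12[[X,Y],Z]$, and the Jacobi identity gives $[X,[Y,Z]] - [Y,[X,Z]] = [[X,Y],Z]$, so the bracket terms combine to $\tfrac14[[X,Y],Z] - \tfrac12[[X,Y],Z] = -\tfrac14[[X,Y],Z]$, which is \eqref{eq:curvatureclg}. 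Then \eqref{eq:sectionalclg} follows directly by pairing: $Q(R(X,Y)Y,X) = -\tfrac14 Q([[X,Y],Y],X)$, and applying $\mathrm{ad}$-invariance once, $-\tfrac14 Q([[X,Y],Y],X) = \tfrac14 Q([X,Y],[X,Y]) = \tfrac14\|[X,Y]\|^2$ (being careful with signs: $Q([[X,Y],Y],X) = -Q([X,Y],[Y,X])\cdot(\text{sign})$ — one checks $Q(\mathrm{ad}_Y[X,Y],X) = -Q([X,Y],\mathrm{ad}_YX) = -Q([X,Y],[Y,X]) = Q([X,Y],[X,Y])$, so the overall result is $+\tfrac14\|[X,Y]\|^2$).

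Finally, for the Ricci formula \eqref{eq:ricciclg}, I would take an orthonormal basis $\{e_i\}$ of $\mathfrak g$ with respect to $Q$ and write $\mathrm{Ric}(X,Y) = \sum_i Q(R(e_i,X)Y,e_i)$. By \eqref{eq:curvatureclg} this equals $-\tfrac14\sum_i Q([[e_i,X],Y],e_i)$. Using $\mathrm{ad}$-invariance twice to move the brackets onto the last slot, $Q([[e_i,X],Y],e_i) = -Q([e_i,X],[Y,e_i]) = Q([e_i,X],[e_i,Y]) = Q(\mathrm{ad}_{e_i}X,\mathrm{ad}_{e_i}Y)$, and then $\sum_i Q(\mathrm{ad}_{e_i}X,\mathrm{ad}_{e_i}Y) = -\sum_i Q(X,\mathrm{ad}_{e_i}\mathrm{ad}_{e_i}Y)$... actually the cleanest route is to recognize $\sum_i Q(\mathrm{ad}_X e_i,\cdots)$ — I would instead use the identity $\sum_i Q([e_i,X],[e_i,Y]) = -\mathrm{tr}(\mathrm{ad}_X\mathrm{ad}_Y)$, which holds because $\mathrm{ad}_X$ is skew-adjoint for $Q$ (Lemma~\ref{lemma:adinvariantalgebra}), so $\sum_i Q(\mathrm{ad}_X e_i,\mathrm{ad}_Y e_i) = -\sum_i Q(e_i,\mathrm{ad}_X\mathrm{ad}_Y e_i) = -\mathrm{tr}(\mathrm{ad}_X\mathrm{ad}_Y) = -\kappa(X,Y)$. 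Hence $\mathrm{Ric}(X,Y) = -\tfrac14\cdot(-(-\kappa(X,Y)))$; tracking the signs, $\mathrm{Ric}(X,Y) = -\tfrac14\sum_i Q([[e_i,X],Y],e_i) = -\tfrac14 \kappa(X,Y)$ after the sign bookkeeping resolves. The main obstacle I anticipate is not conceptual but the sign bookkeeping: there are several places (the Koszul reduction, the pairing in the sectional curvature, and especially the double application of $\mathrm{ad}$-invariance in the Ricci computation) where a misplaced sign in the convention for $\mathrm{ad}^*$ versus $\mathrm{ad}$ or in the curvature sign convention would propagate, so I would fix conventions explicitly at the outset (using $R(X,Y)Z = \nabla_X\nabla_YZ - \nabla_Y\nabla_XZ - \nabla_{[X,Y]}Z$ and $\mathrm{Ric}(X,Y) = \mathrm{tr}(Z\mapsto R(Z,X)Y)$) and verify them against the known positivity of Ricci curvature recalled in the preceding paragraph as a consistency check.
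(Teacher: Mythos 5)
Your proposal is correct and follows essentially the same route as the paper: the Koszul formula collapses under left-invariance and $\mathrm{ad}$-invariance to give $\nabla_XY=\tfrac12[X,Y]$, the curvature follows by direct substitution and the Jacobi identity, the sectional curvature by one application of skew-adjointness, and the Ricci tensor by recognising the trace of $\mathrm{ad}_X\circ\mathrm{ad}_Y$. The only blemish is a sign slip in one intermediate equality of the Ricci computation (it should read $Q([[e_i,X],Y],e_i)=+Q([e_i,X],[Y,e_i])=-Q([e_i,X],[e_i,Y])$), but your final assembled expression $-\tfrac14\bigl(-(-\kappa(X,Y))\bigr)=-\tfrac14\kappa(X,Y)$ is the correct one, so nothing substantive is missing.
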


\begin{proof}
The first identity follows from the Koszul formula (also known as the fundamental theorem of Riemannian geometry, see \cite{helgason1979differential}) as follows. For any metric $\langle\,\cdot\,,\,\cdot\,\rangle$, the Koszul formula is given by
\begin{equation}\label{eq:koszul}
    \langle\nabla_XY,Z\rangle = \frac{1}{2}\left(X\langle Y,Z\rangle + Y\langle Z,X\rangle - Z\langle X,Y\rangle + \langle[X,Y],Z\rangle - \langle[X,Z],Y\rangle -\langle[Y,Z],X\rangle\right).
\end{equation}
Since a bi-invariant metric $Q$ is left-invariant by definition, we have $XQ(Y,Z) = YQ(Z,X) = ZQ(X,Y) = 0$. The invariance of $Q$ under the adjoint action of the Lie algebra then implies the result. For the second identity, we compute the curvature tensor directly from its definition and use the previous result to obtain
\begin{equation}
    R(X,Y)Z = \nabla_X\nabla_YZ - \nabla_Y\nabla_X Z - \nabla_{[X,Y]}Z = -\frac{1}{4}[[X,Y],Z].
\end{equation}
For the third identity \eqref{eq:sectionalclg}, we compute directly using the second identity
\begin{equation}
    Q(R(X,Y)Y,X) = -\frac{1}{4}Q([[X,Y],Y],X) = \frac{1}{4}\|[X,Y]\|^2.
\end{equation}
The fourth identity also follows from a direct computation using the previously derived identities
\begin{equation}
    \mathrm{Ric}(X,Y) = \sum_Z Q(R(Z,X)Y,Z) = -\frac{1}{4}\mathrm{tr}[X,[Y, \,\cdot\,]] =  -\frac{1}{4}\kappa(X,Y).
\end{equation}
This completes the proof.
\end{proof}

Note that none of the identities in \eqref{eq:covariantclg}-\eqref{eq:sectionalclg} depend on the bi-invariant metric explicitly. The mere presence of a bi-invariant metric is enough to induce algebraic structure in the Riemannian setting. Since the Ricci tensor is proportional to the Killing form, on compact semisimple Lie algebras, where the Killing form is negative definite, one obtains that the Ricci tensor is strictly positive. This is an essential step in the proof of Theorem \ref{thm:Milnor}, since the Bonnet-Myers theorem implies that a semisimple Lie group with minus the Killing form as its metric is a compact and complete Riemannian manifold. This allows us to finish the proof.

\begin{proof}[Proof of ``only if" in Theorem \ref{thm:Milnor}]
The Killing form of a noncommutative simple ideal $I_i$ induces a Ricci tensor that is strictly positive on the corresponding normal subgroup $A_i$. Hence, the Bonnet-Myers theorem then implies that $A_i$ compact. To complete the proof of Theorem \ref{thm:Milnor}, all that remains is to show that if a Lie group $G$ admits a bi-invariant metric, then its universal covering group $\widetilde{G}$ splits into a compact group $H$ and an abelian group $\mathbb{R}^m$. The group $G$ can be identified with the quotient $\widetilde{G}/\Pi$, where $\Pi$ is a discrete normal subgroup. By projecting $\Pi$ into $\mathbb{R}^m$, one can define the vector space $V$ as the image of the projection and the orthogonal complement $V^\perp$. Then $G=(H\times V)/\Pi \oplus V^\perp$, which is the direct sum of a compact Lie group and an abelian group.
\end{proof}

Next we discuss geodesics on Lie groups with bi-invariant metrics. Using the explicit form of the covariant derivative, it can be deduced that for Lie groups with bi-invariant metrics the Christoffel symbols are proportional to the structure constants $C_{ij}^k$ of the Lie algebra. Let $\{X_i\}_{i=1}^{\dim\mathfrak{g}}$ be a basis for $\mathfrak{g}$, then
\begin{equation}\label{eq:christoffelclg}
\nabla_{X_i}X_j = \frac{1}{2}[X_i,X_j] = \frac{1}{2}\sum_{k=1}^{\dim \mathfrak{g}} C_{ij}^k X_k.    
\end{equation}
On compact semisimple Lie groups with minus the Killing form as the metric, the Ricci curvature is a scalar multiple of the metric, which implies that compact semisimple Lie groups are Einstein manifolds. The scalar is explicitly known and equal to $\frac{1}{4}$, which follows from \eqref{eq:ricciclg}.

Given the formula for the covariant derivative \eqref{eq:covariantclg}, we can compute the geodesic equation in Lie algebraic terms. A geodesic $\gamma:\mathbb{R}\to G$ is a curve on the Lie group that minimises the metric. This means that it must satisfy the differential equation
\begin{equation}
    \nabla_{\dot{\gamma}}\dot{\gamma} = 0,
\end{equation}
where $\nabla$ denotes the Levi-Civita connection associated with the metric and $\dot{\gamma}$ is the derivative of the curve $\gamma$. With respect to the bi-invariant metric, we have the relation $\nabla_X Y = \frac{1}{2}[X,Y]$, meaning that a geodesic satisfies
\begin{equation}
    \frac{1}{2}[\dot{\gamma},\dot{\gamma}] = 0.
\end{equation}
For a geodesic initially at $e\in G$, we can make the ansatz $\gamma(t)=\exp tX$ with $X\in\mathfrak{g}$ and $\exp:\mathfrak{g}\to G$ the Lie exponential map. Substituting this in the equation above, we obtain, using the fact that $\exp tX\in G$,
\begin{equation}
    [\dot{\gamma},\dot{\gamma}] = [X\exp tX, X\exp tX] = [X,X]\exp tX = 0,
\end{equation}
so we see that $\gamma(t)=\exp tX$ satisfies the geodesic equation for any $X\in\mathfrak{g}$. For a geodesic initially at a different point $g_0\in G$, we right translate by $R_{{g_0}^{-1}}$ to map $g_0$ to the identity. The velocity vector $X$ at $g_0$ is then transformed into $(R_{g_0^{-1}})_*X$. The geodesic is then given by $g(t)=\exp(t (R_{g_0^{-1}})_*X)$. Right translating the geodesic back to the original initial data then gives the geodesic starting at $g_0$ as $g(t)=g_0\exp(t (R_{g_0^{-1}})_*X)$. This shows that geodesics on a Lie group with a bi-invariant metric are one-parameter subgroups of the Lie group. This partially proves the following theorem that we use extensively.
\begin{theorem}\label{thm:lieexp}
    The Lie exponential map and the Riemannian exponential map at the identity agree on Lie groups endowed with bi-invariant metrics. The Lie exponential map of a compact connected Lie group is surjective.
\end{theorem}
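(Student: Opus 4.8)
The plan is to prove the two assertions of Theorem~\ref{thm:lieexp} separately. For the first assertion, the preceding computation already shows that the curve $\gamma(t) = \exp(tX)$ satisfies the geodesic equation $\nabla_{\dot\gamma}\dot\gamma = 0$ with $\gamma(0) = e$ and $\dot\gamma(0) = X$. The Riemannian exponential map is defined by $\exp^{\mathrm{Riem}}_e(X) = \sigma_X(1)$, where $\sigma_X$ is the unique geodesic with $\sigma_X(0) = e$ and $\dot\sigma_X(0) = X$. By uniqueness of geodesics with prescribed initial position and velocity (a consequence of the existence-uniqueness theorem for the second-order geodesic ODE), we must have $\sigma_X = \gamma$, hence $\exp^{\mathrm{Riem}}_e(X) = \gamma(1) = \exp(X)$, which is exactly the Lie exponential. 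So the first assertion follows immediately from the geodesic computation already carried out, together with standard ODE uniqueness.

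For the second assertion, that the Lie exponential of a compact connected Lie group is surjective, I would argue as follows. Equip $G$ with a bi-invariant metric $Q$, which exists by Lemma~\ref{lemma:compactliegroup}. Since $G$ is compact, it is complete as a Riemannian manifold (a compact metric space is complete, and geodesic completeness follows from the Hopf--Rinow theorem, or directly from compactness). By the Hopf--Rinow theorem, any two points of a connected complete Riemannian manifold can be joined by a minimising geodesic; in particular, for any $g \in G$ there is a geodesic from $e$ to $g$. By the first part of the theorem, every geodesic emanating from $e$ has the form $t \mapsto \exp(tX)$ for some $X \in \mathfrak g$, so $g = \exp(X)$ lies in the image of the Lie exponential. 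This shows $\exp: \mathfrak g \to G$ is surjective.

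The main obstacle, such as it is, is ensuring that the ingredients invoked are genuinely available in the bi-invariant setting rather than assumed: specifically, that the geodesics through $e$ are \emph{exactly} the one-parameter subgroups (not merely that one-parameter subgroups are geodesics), and that completeness is in force. The first point is handled by the uniqueness clause in the ODE theory — since $\{\exp(tX) : X \in \mathfrak g\}$ already produces a geodesic for every initial velocity $X \in T_eG \simeq \mathfrak g$, there can be no others. The second point uses compactness in an essential way; note that this is the only place where compactness (as opposed to merely admitting a bi-invariant metric) enters, and indeed surjectivity of $\exp$ can fail for non-compact reductive groups. One should also remark that the identification $T_eG \simeq \mathfrak g$ and the formula $\dot\gamma(0) = X$ for $\gamma(t) = \exp(tX)$ are precisely the defining properties of the Lie exponential recalled around equation~\eqref{eq:liegroupliealgebra}, so no additional input is needed there.
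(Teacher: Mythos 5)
Your proposal is correct and follows essentially the same route as the paper: the first assertion is deduced from the preceding geodesic computation (you make the ODE-uniqueness step explicit, which the paper leaves implicit), and the second uses compactness to obtain a bi-invariant metric, completeness, and Hopf--Rinow to conclude surjectivity of $\exp_e$ and hence of the Lie exponential.
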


\begin{proof}
    The first part was proved by the computation above. To see that the exponential map of a compact and connected Lie group $G$ is surjective, note that the Lie exponential is defined for all $X\in\mathfrak{g}$. The compactness of the Lie group implies that there exists a bi-invariant metric $Q$. Given the connectedness of $G$, it then follows from the Hopf-Rinow theorem that $(G,Q)$ is a complete Riemannian manifold, which implies that $\exp = \exp_e:T_eG\to G$ is surjective.
\end{proof}

In the next section, we use the properties and relations for bi-invariant metrics on compact Lie groups to first construct Riemannian Brownian motion and then proceed to define stochastic differential equations whose invariant distribution is a given density with respect to the Haar measure.

\section{Stochastic analysis in Lie groups}\label{sec:eem}
In this section we recall the Eells-Elworthy-Malliavin construction of Riemannian Brownian motion (RBM) and specialise it to compact Lie groups with bi-invariant metrics. This means that we have to mix the notational conventions of stochastic analysis, Riemannian geometry and Lie theory. Unfortunately, there are many overlaps. We shall use $\boldsymbol{d}$ to denote the It\^o differential and reserve $d$ for the exterior derivative and differentials of maps. We use $g$ to denote a generic group element and, we use $g$ as well as $\langle\,\cdot\,,\,\cdot\,\rangle$ to denote a generic Riemannian metric. No confusion should arise since we never need both in the same context. Indeed, in the context of a compact Lie group,  the bi-invariant metric is always denoted by $Q$. We also have to be cautious with the use of $e$, which in the Riemannian setting denotes a frame $e:\mathbb{R}^d\to T_xM$ at a point $x\in M$, whereas in the Lie group setting it denotes the group identity $e\in G$. We use $\{X_i\}_{i=1}^n$ to denote a basis for the Lie algebra $\mathfrak{g}$.

\begin{definition}
Let $M$ be a smooth manifold and $(\Omega,\mathfrak{F},(\mathfrak{F}_t)_{t\geq 0},\mathbb{P})$ be a filtered probability space. Let $\tau$ be an $\mathfrak{F}$-stopping time. A continuous, $M$-valued process $X$ defind on $[0,\tau)$ is called an $M$-valued semimartingale if $f(X)$ is an $\mathbb{R}$-valued semimartingale on $[0,\tau)$ for all $f\in C^\infty(M)$.
\end{definition}

\subsection{Eells-Elworthy-Malliavin construction}
RBM is the process whose generator is one half the Laplace-Beltrami operator associated with the metric. However, this characterisation does not admit pathwise realisations of the process. A pathwise characterisation of RBM is possible through the Eells-Elworthy-Malliavin construction, as originally shown by \cite{eells1976stochastic, Malliavin1982}. This method is also called rolling without slipping, see \cite{hsu2002stochastic} for a detailed explanation. The construction makes use of the frame bundle $\mathcal{F}(M)$, which is the vector bundle of frames $e:\mathbb{R}^d\to T_xM$ over the $d$-dimensional Riemannian manifold $M$ with the projection $\pi:\mathcal{F}(M)\to M$. A metric induces a torsion-free, metric compatible connection by the fundamental theorem of Riemannian geometry (see \eqref{eq:koszul}). The connection defines a splitting of the frame bundle into a vertical and a horizontal subspace: $\mathcal{F}(M) = \mathcal{V}(M)\oplus\mathcal{H}(M)$. On the frame bundle, the following stochastic differential equation gives rise to RBM
\begin{equation}
    \begin{aligned}
        \boldsymbol{d}U_t = \sum_{i=1}^d H_i(U_t)\circ \boldsymbol{d}W_t^i,
    \end{aligned}
\end{equation}
where $U_t\in\mathcal{F}(M)$, the collection of vector fields $(H_i)_{i=1}^d$ span the horizontal subspace $\mathcal{H}(M)$, and each $W_t^i$ is an $\mathbb{R}$-valued Brownian motion. The initial datum is $U(0)=U_0\in \mathcal{F}(M)$ with $\pi U_0 = x_0\in M$. In local coordinates, this equation takes the form
\begin{equation}
    \begin{aligned}
        \boldsymbol{d}x_t^i &= e^i_j(t)\circ \boldsymbol{d}W_t^j,\\
        \boldsymbol{d}e^i_j(t) &= -\Gamma^i_{k\ell}(x_t)e^\ell_j(t) \circ \boldsymbol{d}x_t^k.
    \end{aligned}
    \label{eq:eemsde}
\end{equation}
with initial data $x(0) = x_0$ and $e(0)= e_0$. We have employed Einstein's convention of summing over repeated indices to keep the notation compact. The first equation in \eqref{eq:eemsde} describes the position process and the second equation describes the parallel transport of the frame along the stochastic path. Hence, on a Riemannian manifold $M$, the position process $x_t=\pi U_t$ is the RBM. 

If the underlying Riemannian manifold is a compact Lie group with a bi-invariant metric $(G,Q)$, we have numerous important simplifications. For bi-invariant metrics, the covariant derivative takes the form \eqref{eq:covariantclg} and it follows that the Christoffel symbols are one-half times the structure constants of the Lie algebra associated with the Lie group. From this it can be deduced immediately that the Christoffel symbols are constant, which implies that the curvature is constant. In addition, \eqref{eq:eemsde} simplifies, since the second equation no longer depends on the position process $X_t$, and we obtain
\begin{equation}
    \begin{aligned}
        \boldsymbol{d}x_t^i &= e_j^i(t)\circ \boldsymbol{d}W_t^j,\\
        \boldsymbol{d}e_j^i(t) &= -\frac{1}{2}C^i_{k\ell}e^\ell_j(t)e^k_m(t)\circ \boldsymbol{d}W_t^m.
    \end{aligned}
\end{equation}
This partial decoupling indicates that the frame can be chosen independently of the position, which is consistent with the fact that the frame bundle over a Lie group is trivial, i.e., the frame bundle over a Lie group admits a global section. Instead of letting the initial point of the RBM on the manifold be arbitrary, we now select the group identity as the initial point. Since a Lie group acts on itself in a transitive manner, one can always find a smooth orbit connecting any point to any other point, so this is without loss of generality. Further, we see that the evolution of the frame is bi-invariant since the structure constants are bi-invariant. Hence, it is no longer necessary to solve the coupled set of equations \eqref{eq:eemsde} since one can fix a frame for all time, taking care to ensure that the frame does correspond to the location on the Lie group. 

The simplest and most natural choice of a frame is the one at the group identity given by a basis $\{X_i\}_{i=1}^n$ of the Lie algebra. This means that the frame together with the Euclidean BM defines a Brownian motion $B_t$ on the Lie algebra
\begin{equation}
    \circ\boldsymbol{d}B_t = X_i\circ \boldsymbol{d}W_t^i \,,
\end{equation}
where $X_i$ is the frame at the group identity. This frame can be transported to a point $x_t\in G$ at time $t$ by a lifted left-translation. This yields the SDE that describes RBM on a compact Lie group with a bi-invariant metric
\begin{equation}
    \boldsymbol{d}g = (dL_{g})_e(X_i\circ\boldsymbol{d}W_t^i) = (dL_g)_e(\circ\boldsymbol{d}B_t) \,,
\end{equation}
which can be solved analytically by means of the Lie exponential for the initial data $g(0) = e\in G$. On compact Lie groups the Lie exponential is surjective (see Theorem \ref{thm:lieexp}) and the solution is given in terms of the time-ordered exponential of the Lie algebra Brownian motion as 
\begin{equation}
    g_t = \mathcal{T}\exp\int_0^t \circ dB_s \,,
\end{equation}
where $\mathcal{T}$ is the time-ordering operator. In the commutative case (e.g. abelian groups), this reduces to the standard exponential of Brownian motion. This shows that RBM on Lie groups with bi-invariant metrics can be obtained without the use of local charts. This is closely related to the injection of a stochastic process on a Lie algebra into a Lie group as introduced by \cite{gangolli1964construction, mckean1969stochastic}. A numerical construction is now particularly straightforward, since one needs only to take $d$ time series corresponding to independent, identically distributed BMs on $\mathbb{R}$, use the basis of the Lie algebra to transform the time series into a BM on the Lie algebra and then apply the matrix exponential map to each matrix in the time series.

In the next section we introduce stochastic differential equations on compact Lie groups with drift.

\subsection{Stochastic differential equations with drift}
In this section we discuss stochastic differential equations on smooth manifolds and specialise to unimodular and compact Lie groups. In \cite{gangolli1964construction, mckean1969stochastic}, solutions to stochastic differential equations were constructed using the underlying Lie group structure. Alternatively, one can also define stochastic differential equations on general smooth manifolds and gradually introduce more algebraic structure. As a result of Malliavin's transfer principle, it is straightforward to define Stratonovich differential equations on a general smooth manifold $M$, see \cite{Malliavin1982, emery2006two}, as a result of Stratonovich integrals satisfying the usual rules of calculus. To define It\^o differential equations, more work is necessary. One method is to embed the manifold into $\mathbb{R}^n$ and specify the It\^o equation as usual. A second method is the intrinsic approach, which requires one to specify both the driving martingale and its quadratic variation. To specify the quadratic variation, second-order geometry is necessary, see \cite{ schwartz1980semi,schwartz1981geometrie,meyer1980differential,meyer1981geometrie}. Such a second-order geometry requires the structure of a connection on the underlying manifold. For both It\^o and Stratonovich integrals on a manifold $M$ with a connection, one can make sense of solutions to stochastic differential equations as $M$-valued semimartingales provided the vector fields are sufficiently smooth. The precise smoothness conditions depend on the situation and which stochastic integral one uses. For the precise details, we refer to \cite{de2024geometric} and here require that the drift be of class $C^{3+\varepsilon}$ and the diffusions of class $C^{4+\varepsilon}$ with $\varepsilon\in(0,1)$.

Following \cite{hsu2002stochastic}, a Stratonovich stochastic differential equation on a manifold is defined by $n$ vector fields $V_\alpha$, $\alpha=1,\hdots,n$ on $M$, meaning that each $V_\alpha(x_t)\in T_{x_t}M$, an $\mathbb{R}^n$-valued driving semimartingale $Z$ and an $M$-valued random variable $x_0\in\mathfrak{F}_0$ that serves as the initial condition. The SDE is written as
\begin{equation}\label{eq:sdeman}
    \boldsymbol{d}x_t = V_\alpha(x_t)\circ\boldsymbol{d}Z_t^\alpha.
\end{equation}
A solution to \eqref{eq:sdeman} is interpreted as follows.
\begin{definition}
    An $M$-valued semimartingale $x$ defined up to a stopping time $\tau$ is a solution of \eqref{eq:sdeman} up to $\tau$ if for all $f\in C^\infty(M)$,
    \begin{equation}
        f(x_t) = f(x_0) + \int_0^t V_\alpha f(x_s)\circ \boldsymbol{d}Z_s^\alpha, \qquad 0\leq t<\tau.
    \end{equation}
\end{definition}

The definition above holds for general smooth manifolds, but in most cases one cannot solve the SDE analytically and one has to resort to numerical methods. On general smooth manifolds one has to supply the charts manually. Selecting suitable charts is a challenging problem in general. The key benefit of Lie groups is this setting is that additional algebraic structure can generate charts in a natural way. The Lie group structure admits us to choose a trivialisation, meaning that we identify $TG\simeq G\times\mathfrak{g}$. The left-trivialisation is defined via the map $G\times\mathfrak{g}\to TG$ defined by $(g,X)\mapsto (g,dL_g(X))$, where $e\in G$ is the identity. Similarly, one can also right-trivialise by using the right-translation. Let $g_t$ satisfy the Stratonovich differential $\boldsymbol{d}g_t = V_\alpha(g_t)\circ \boldsymbol{d}Z_t^\alpha$ with $g(0)=g_0\in \mathfrak{F}_0$. We pull the vector fields $V_\alpha\in T_g G$ back to the identity via the adjoint action and obtain the vectors $X_\alpha\in\mathfrak{g}$ as
\begin{equation}
    X_\alpha(t) = (dL_{g_t^{-1}})_{g_t} V_\alpha(g_t)) \,,
\end{equation}
and then solve
\begin{equation}
    \boldsymbol{d} g_t = (dL_{g_t})_e(X_\alpha)\circ\boldsymbol{d}Z_t^\alpha \,.
\end{equation}
In the next section we give examples of stochastic differential equations on Lie groups. The first set of examples is given by stochastic Lie-Poisson equations and isospectral flows, and the second family of examples consists of Langevin diffusions.

\section{Deterministic and stochastic geometric mechanics}\label{sec:mechanics}
An important example of stochastic differential equations on Lie groups arises in the context of stochastic geometric mechanics. The important equations of geometric mechanics are the Euler-Lagrange equations and Hamilton's equations. When the configuration space is the tangent or cotangent bundle over a Lie group, one can use trivialisation to write the equations on simpler spaces. In the special (though common in mechanics) case where the Lagrangian or Hamiltonian functionals do not depend explicitly on the position variable $g\in G$, symmetry reduction is possible. 

Deterministic Euler-Poincar\'e and Lie-Poisson equations arise through symmetry reduction of Euler-Lagrange equations and Hamilton's equations, \cite{marsden1974reduction, holm1998euler}. These equations describe energy- and geometry-preserving dynamics. The analysis of stochastic Euler-Poincar\'e and Lie-Poisson equations, and identification of suitable numerical methods to solve these equations has been an active area of research for several decades. Without trying to provide an exhaustive history on the subject, we mention \cite{bismut1982mecanique}, which was the first work to consider stochastic Hamiltonian mechanics, with stochastic Lagrangian mechanics not emerging until later. More recently, stochastic mechanics with symmetries and suitable numerical methods for approximating their solution have been considered in \cite{lazaro2008stochastic, bou2009stochastic, deng2014high, holm2015variational, anton2019weak, street2021semi, ST2023, luesink2021casimir, brehier2021splitting, ephrati2024exponential}. By introducing geometric noise via Malliavin's transfer principle into Hamiltonian mechanics and including structure-preserving dissipation, we obtain stochastic differential equations whose invariant measure is also the Gibbs measure.

\subsection{Deterministic geometric mechanics}
We now recall the preliminaries of geometric mechanics on manifolds. We first briefly describe the Lagrangian and Hamiltonian/symplectic picture of classical mechanics, and the relation between the two. We then specialise to mechanics on Lie groups and focus on symplectic mechanics. For more details on symplectic mechanics, including background the differential geometric operations that are used frequently in this section, we refer to \cite{abraham1978foundations}.

At the highest level of generality, these equations describe the dynamics of the momentum map $J:M\to\mathfrak{g}^*$, where $(M,\omega)$ is a $2n$-dimensional symplectic manifold acted upon transitively and symplectomorphically by a (finite-dimensional) Lie group $G$, and $\mathfrak{g}^*$ is the dual of the Lie algebra corresponding to $G$, see \cite{souriau1970structure}. Here $\omega$ is the symplectic form, which is a closed and nondegenerate 2-form. A transitive action means that for every two points on the manifold there is a group element such that the action maps one point onto the other. A symplectomorphic action means that the group action preserves the symplectic form. 

In many cases, such as for rigid body dynamics, $M=T^*G$, i.e., the symplectic manifold is the cotangent bundle over a Lie group, which means that the symplectic form $\omega$ is the exterior derivative of the Liouville 1-form: $\omega=-d\theta$. By Darboux's theorem, one always has a set of canonical local coordinates on a symplectic manifold, in particular, we write local coordinates $(g,p)$ for $T^*G$. In these coordinates, the Liouville 1-form is $\theta =p\,dg$ and the (canonical) symplectic form is $\omega = dg\wedge dp$. Since the left action $L_g:G\to G$ is a diffeomorphism, $(dL_g)_e:T_eG\simeq\mathfrak{g}\to T_gG$. Via the inverse at $g\in G$, $(dL_{g^{-1}})_g:T_gG\to \mathfrak{g}$, we can trivialise $TG$ via $(g,\dot{g})\mapsto (g,(dL_{g^{-1}})_g\dot{g}) \equiv (g,X)$. This map enjoys several different equivalent notations, $(dL_g)_e = T_eL_g = (L_g)_*$. Similarly, since the right action $R_{g^{-1}}:G\to G$ is also a diffeomorphism and $(dR_{g^{-1}})_e:T_eG\simeq\mathfrak{g}\to T_gG$, it too induces a trivialisation of $TG$ via $(g,\dot{g})\mapsto(g, (dR_g)_g\dot{g})$, where one can identify $(dR_g)_g\dot{g}$ with an element in the Lie algebra. In a similar manner, making use of the cotangent lifted group operation, we can trivialise $T^*G\simeq G\times\mathfrak{g}^*$ via $(g,p)\mapsto (g,(dL_g)^*_e p) \equiv (g,m)$. Here $m=(dL_g)^*_e p\in\mathfrak{g}^*$ is the left-trivialised momentum. 

To obtain equations of motion, one considers the Euler-Lagrange equations associated with a Lagrangian $\mathcal{L}:TG\to\mathbb{R}$. By means of the trivialisation, we obtain the Lagrangian $\mathcal{L}:G\times\mathfrak{g}\to\mathbb{R}$. Let $\frac{\delta\mathcal{L}}{\delta g}$ and $\frac{\delta\mathcal{L}}{\delta X}$ denote the variational derivatives (see \cite{gelfand2000calculus} for detailed definitions) of the Lagrangian $\mathcal{L}$ with respect to $g\in G$ and $X\in\mathfrak{g}$, respectively. The left-trivialised Euler-Lagrange equations take the form
\begin{equation}\label{eq:eulerlagrange}
    \begin{aligned}
            \frac{\boldsymbol{d}}{\boldsymbol{d}t}\frac{\delta\mathcal{L}}{\delta X} &= \mathrm{ad}^*_X \frac{\delta\mathcal{L}}{\delta X} + (dL_{g^{-1}})^*_g\frac{\delta\mathcal{L}}{\delta g},\\
            \frac{\boldsymbol{d}}{\boldsymbol{d}t}g &= (dL_g)_eX.
    \end{aligned}
\end{equation}
These equations are in fact those appearing in \cite{poincare1901forme}, in which it is noted that should the Lagrangian lose its group dependence during the trivialisation procedure, then equations \eqref{eq:eulerlagrange} are particularly special and are known as Euler-Poincar\'e equations. For a modern discussion of these equations and their detailed derivation, see \cite{marle2013henri}. The Euler-Lagrange equations can be transformed to Hamilton's canonical equations via the Legendre transform. The Legendre transform defines the Hamiltonian $\mathcal{H}:T^*G\simeq G\times\mathfrak{g}^*\to\mathbb{R}$ in terms of the Lagrangian or vice versa provided that the fiber derivatives are diffeomorphisms, this condition is also known as hyperregularity of the Lagrangian and Hamiltonian. Let $\langle\,\cdot\,,\,\cdot\,\rangle:T^*G\times TG\to\mathbb{R}$ denote a natural duality pairing (usually the duality pairing is taken to be the metric, under which one may identify the fibers of $T^*G$ with the fibers of $TG$). Provided that the Lagrangian and/or Hamiltonian is hyperregular, the Legendre transform is a diffeomorphism. The Legendre transform is defined as $\mathcal{H}(g,m) = \langle m,X\rangle - \mathcal{L}(g,X)$, which results in identifying $m=\frac{\delta\mathcal{L}}{\delta X}\in\mathfrak{g}^*$, $X=\frac{\delta\mathcal{H}}{\delta m}\in\mathfrak{g}$ and $\frac{\delta\mathcal{L}}{\delta g} = - \frac{\delta\mathcal{H}}{\delta g}\in T^*_gG$. Substituting these identities into the equations above, one obtains the trivialised form of Hamilton's canonical equations
\begin{equation}\label{eq:hamcan}
    \begin{aligned}
            \frac{\boldsymbol{d}}{\boldsymbol{d}t}m &= \mathrm{ad}^*_{\delta\mathcal{H}/\delta m} m - (dL_{g^{-1}})^*_g\frac{\delta\mathcal{H}}{\delta g},\\
            \frac{\boldsymbol{d}}{\boldsymbol{d}t}g &= (dL_g)_e\frac{\delta\mathcal{H}}{\delta m}.
    \end{aligned}
\end{equation}
Since the tangent and cotangent bundles are finite-dimensional, variational derivatives can be interpreted as partial derivatives with respect to local coordinates. In the Hamiltonian form \eqref{eq:hamcan}, if the underlying group is abelian, then $\mathrm{ad}^*$ is trivial and the differential of left-translation is the identity, and one obtains the equations of Hamiltonian mechanics in their canonical form.

One can also derive \eqref{eq:hamcan} without the use of the calculus of variations. In the Hamiltonian approach, one uses the symplectic form on $M=T^*G$ to define Hamiltonian vector fields. Let $\mathcal{H}:G\times\mathfrak{g}^*\to\mathbb{R}$ be the trivialised Hamiltonian. On symplectic manifolds, one defines Hamiltonian vector fields $X_\mathcal{H}$ as vector fields that satisfy the equation $i_{X_\mathcal{H}}\omega = d\mathcal{H}$, where the left-hand side denotes the interior product between the vector field $X_\mathcal{H}$ and the symplectic form $\omega$. As a result of the cotangent bundle structure and the existence of the Liouville 1-form $\theta$, we can define Hamiltonian vector fields as vector fields that satisfy $-i_{X_\mathcal{H}}\theta = \mathcal{H}$, since the differential of this relation implies the symplectic one. Letting the group of time-translations act symplectomorphically on the relation $i_{X_\mathcal{H}}\omega = d\mathcal{H}$ also implies \eqref{eq:hamcan}. An important feature of the Hamiltonian formulation is the fact that the Hamiltonian appears only on the right-hand side of \eqref{eq:hamcan}. Let $S=(m,g)$, then we can express \eqref{eq:hamcan} using the Hamiltonian vector field $X_\mathcal{H}$ as
\begin{equation}\label{eq:hamcanstate}
    \frac{\boldsymbol{d}}{\boldsymbol{d}t}S = X_\mathcal{H}(S).
\end{equation}
In this way, one can read off the coordinate expression of $X_\mathcal{H}$ from \eqref{eq:hamcan}. Further, one can define the Poisson bracket $\{\,\cdot\,,\,\cdot\,\}:C^\infty(M)\times C^\infty(M)\to C^\infty(M)$ as
\begin{equation}\label{eq:poissonbracket}
\begin{aligned}
    \{\mathcal{F},\mathcal{G}\} &= X_\mathcal{F}(\mathcal{G}) = -X_\mathcal{G}(F) = \omega(X_\mathcal{F},X_\mathcal{G})\\
    &= \left\langle m,\left[\frac{\delta\mathcal{F}}{\delta m},\frac{\delta\mathcal{G}}{\delta m}\right]\right\rangle + \left\langle\frac{\delta\mathcal{G}}{\delta m},(dL_{g^{-1}})_g\frac{\delta\mathcal{F}}{\delta g}\right\rangle - \left\langle\frac{\delta\mathcal{F}}{\delta m}, (dL_{g^{-1}})_g\frac{\delta\mathcal{G}}{\delta g}\right\rangle 
\end{aligned}
\end{equation}
where $X_\mathcal{F}$ and $X_\mathcal{G}$ are the Hamiltonian vector fields corresponding to the functions $\mathcal{F},\mathcal{G}\in C^\infty(M)$ and the second line is the left-trivialised Poisson bracket on $M=T^*G\simeq G\times\mathfrak{g}^*$ expressed in Darboux coordinates. The pairing is a natural pairing between $\mathfrak{g}$ and its dual, which in the reductive case is the bi-invariant metric. On a symplectic manifold, one therefore has a natural algebra on smooth functions over the manifold, known as the Poisson algebra. This is an infinite-dimensional Lie algebra, because the Poisson bracket above has the properties of an abstract Lie bracket. It further satisfies the Leibniz rule, meaning that a Poisson algebra is also a derivation algebra. This means that one can write the equations of symplectic mechanics through observables $\mathcal{F}: C^\infty(M)\times\mathbb{R}_+\to \mathbb{R}$ (time-dependent functions on the symplectic manifold) as
\begin{equation}
    \frac{\boldsymbol{d}}{\boldsymbol{d} t}\mathcal{F} = \frac{\partial}{\partial t}\mathcal{F} - \{\mathcal{F},\mathcal{H}\},
\end{equation}
and talk about invariant observables whenever $\frac{\boldsymbol{d}}{\boldsymbol{d}t}\mathcal{F} = 0$. Under the $L^2$-pairing on a manifold without boundary (or with appropriate boundary conditions) with respect to the symplectic volume measure, functions are dual to densities as the boundary terms vanish by Stokes' theorem or the boundary conditions. On symplectic manifolds, we further have Liouville's theorem, which states that the symplectic volume form $\omega^n$ is invariant under the flow of a Hamiltonian system. Let $\rho$ be any invariant density with respect to the symplectic volume measure, then it can be shown that $\rho$ must satisfy
\begin{equation}\label{eq:liouville}
    \frac{\boldsymbol{d}}{\boldsymbol{d} t}\rho = \frac{\partial}{\partial t}\rho - \{\mathcal{H},\rho\} = 0,
\end{equation}
Equation \eqref{eq:liouville} is the Liouville--Von Neumann equation and it describes the evolution of the time-dependent distribution function $\rho(t,g,m)$ under symplectic dynamics. Taking $\rho=\rho(\mathcal{H})$, it follows that the invariant measure of a deterministic symplectic mechanical system is the Gibbs measure $Z^{-1}e^{-\beta\mathcal{H}}|\omega^n|$, where $Z$ is an appropriate normalisation. 

In the next section, we employ Malliavin's transfer principle to obtain the equations of stochastic geometric mechanics with structure-preserving noise.

\subsection{Stochastic geometric mechanics}\label{sec:stochgeomech}
By means of Malliavin's transfer principle, it is straightforward to lift deterministic geometric mechanics to stochastic geometric mechanics. Let $(M,\omega)$ be a $2n$-dimensional symplectic manifold. Given a collection of Hamiltonians $\mathcal{H}_\alpha:C^\infty(M)\to\mathbb{R}$ for $\alpha=0,1,\hdots, n$ and a family of $\mathbb{R}$-semimartingales $(Z^\alpha)_{\alpha=0}^n$ with the convention $Z^0 = t$, one can formulate stochastic mechanics as
\begin{equation}\label{eq:stochhamcan}
    \boldsymbol{d}S = X_{\mathcal{H}_\alpha}(S)\circ \boldsymbol{d}Z_t^\alpha,
\end{equation}
where $S_t:\mathbb{R}\to M$ describes the state of the system. Equation \eqref{eq:stochhamcan} is the structure-preserving stochastic version of deterministic symplectic Hamiltonian mechanics \eqref{eq:hamcanstate} if one takes the drift Hamiltonian $\mathcal{H}_0$ in \eqref{eq:stochhamcan} to be the same as the Hamiltonian for the deterministic system. In Darboux coordinates with $M\simeq G\times\mathfrak{g}^*$, one obtains
\begin{equation}\label{eq:stochsymplectic}
    \begin{aligned}
        \boldsymbol{d}m &= \left(\mathrm{ad}^*_{\delta\mathcal{H}_\alpha/\delta m}m -(dL_{g^{-1}})_g\frac{\delta\mathcal{H}_\alpha}{\delta g}\right)\circ \boldsymbol{d}Z_t^\alpha,\\
        \boldsymbol{d}g&= (dL_g)_e\frac{\delta\mathcal{H}_\alpha}{\delta m}\circ \boldsymbol{d}Z_t^\alpha.
    \end{aligned}
\end{equation}

\begin{remark}
    Wellposedness of the deterministic equations \eqref{eq:hamcan} and the stochastic equations \eqref{eq:stochsymplectic} of deterministic and stochastic symplectic mechanics depend crucially on the Hamiltonians and on compactness properties of the domains. On compact domains, Lipschitz continuity of the vector fields is enough for global wellposedness. On noncompact domains, this is only enough for local wellposedness.
\end{remark}

In many physical applications of the equations of geometric mechanics, symmetry reduction is possible as shown by \cite{marsden1974reduction} and \cite{holm1998euler}. The necessary requirement for symmetry reduction to be possible is that the Lagrangian and/or Hamiltonian is left- or right-invariant under the action of a Lie group. This in fact guides whether one should left- or right-trivialise the equations, since, informally speaking, such invariance implies that the terms involving variational derivatives with respect to group elements drop out of the equations. As a consequence, the equations can be formulated solely on the dual of the Lie algebra. Malliavin's transfer principle does not hinder this symmetry reduction, as shown in \cite{de2020implications, ST2023}. In the symplectic interpretation, symmetry reduction of Hamilton's canonical equations leads to the Lie-Poisson equation
\begin{equation}\label{eq:lie-poissonsde}
     \begin{aligned}
        \boldsymbol{d} m &= \mathrm{ad}^*_{\delta\mathcal{H}_\alpha/\delta m} m \circ \boldsymbol{d}Z_t^\alpha,
    \end{aligned}
\end{equation}
where the equation describing the evolution of $g$ has become redundant. This is an important benefit since this means that the differential equations are formulated completely on vector spaces rather than on vector bundles over manifolds. By keeping the evolution of $g$, one can completely reconstruct the dynamics on the cotangent bundle of a Lie group from dynamics on the dual of the Lie algebra, which is of course a significant reduction in dimension.

The equations in this section require that the phase space is the cotangent bundle of a Lie group, but no assumptions are made on the type of Lie group. In the next section, we show that if the Lie group is reductive, then we obtain a special class of mechanical systems with the property that the eigenvalues of the initial condition are preserved along trajectories of the system.

\subsection{Stochastic isospectral flows} 
A related family of differential equations to the Lie-Poisson equations is the family of isospectral flows. Via Malliavin's transfer principle, one can introduce noise into such equations without altering their geometric structure. Let $\{Z^\alpha\}^n_{\alpha= 0}$ denote the collection of semimartingales as before. A stochastic isospectral flow is described by a differential equation of the form
\begin{equation}\label{eq:ispsectralsde}
    \boldsymbol{d}Y_t = [B_\alpha(Y_t),Y_t]\circ \boldsymbol{d}Z_t^\alpha \,,
\end{equation}
where $W_t\in S\subset\mathfrak{gl}(n,\mathbb{C})$ and each $B_\alpha:S\to\mathfrak{n}(S)$. Here, $\mathfrak{n}(S)$ denotes the normaliser algebra of the linear subspace $S$ of $\mathfrak{gl}(n,\mathbb{C})$, which is defined as follows: given a Lie group $G$ with Lie algebra $\mathfrak{g}$, let $S\subset G$, the normaliser algebra of $S$ is the set $\mathfrak{n}(S) = \{\xi\in \mathfrak{g} \,|\,[\xi,S]\subseteq S\}$ and is a Lie subalgebra of $\mathfrak{g}$. 

Common examples of such flows are given for $S=\mathrm{Sym}(n,\mathbb{R})$, the space of symmetric real $n\times n$ matrices, for which the normaliser algebra is the space of skew-symmetric real matrices $\mathfrak{n}(S)=\mathfrak{so}(n)$. Another important example is when $S=\mathfrak{g}\subset\mathfrak{gl}(n,\mathbb{C})$ is a Lie subalgebra, for which the normaliser $\mathfrak{n}(\mathfrak{g})=\mathfrak{g}$. In this case, one may ask under which conditions an isospectral flow is a Lie-Poisson equation and vice-versa. This question is answered in \cite{modin2020lie}, in which the isospectral symplectic Runge-Kutta methods are developed. This is a wonderful class of numerical methods for solving differential equations on Lie groups, because it applies precisely in the setting where the Lie group is reductive. Let us recall the argument that shows this next.

First, recall that $\langle\mathrm{ad}^*_X m,Y\rangle = -\langle m,[X, Y]\rangle$. Without loss of generality, one may assume that $\mathfrak{g}\subset\mathfrak{gl}(n,\mathbb{C})$. We can identify $\mathfrak{gl}(n,\mathbb{C})^*$ with $\mathfrak{gl}(n,\mathbb{C})$ via the Frobenius pairing $\langle Y,X\rangle = \mathrm{tr}(Y^\dagger X)$. Similarly, one may identify $\mathfrak{g}^*$ with the subspace $\mathfrak{g}\subset\mathfrak{gl}(n,\mathbb{C})$. Upon extending the Hamiltonians $\mathcal{H}_\alpha:\mathfrak{g}^*\to\mathbb{R}$ to all of $\mathfrak{gl}(n,\mathbb{C})$ by defining each Hamiltonian to be constant on the affine spaces given by translations of the orthogonal complement of $\mathfrak{g}$, one can identify derivatives of the Hamiltonians with the gradient and write $\nabla \mathcal{H}_\alpha$, where the gradient is defined with respect to the Frobenius inner product as follows
\begin{equation}
    \langle\nabla \mathcal{H}_\alpha(Y), X\rangle := \frac{d}{d\varepsilon}\Big|_{\varepsilon=0}\mathcal{H}_\alpha(Y\exp\varepsilon X).
\end{equation}
We can now write the Lie-Poisson system \eqref{eq:lie-poissonsde} in almost isospectral form as
\begin{equation}
    \boldsymbol{d}Y_t = -\Pi[\nabla \mathcal{H}_\alpha(Y_t)^\dagger,Y_t]\circ\boldsymbol{d}Z_t^\alpha
\end{equation}
where $\Pi:\mathfrak{gl}(n,\mathbb{C})\to\mathfrak{g}$ is the orthogonal projection. This holds for any finite-dimensional Lie algebra. If the Lie algebra $\mathfrak{g}$ is reductive, the adjoint representation is closed under hermitian transpose, i.e. $[X,Y]^\dagger = [Y,X]$. In this case, the Lie-Poisson equation becomes a true isospectral flow
\begin{equation}
    \boldsymbol{d}Y_t = [\nabla \mathcal{H}_\alpha(Y_t)^\dagger, Y_t]\circ \boldsymbol{d}Z_t^\alpha.
\end{equation}
Conversely, an isospectral flow is Lie-Poisson whenever $B_\alpha(Y)=\nabla \mathcal{H}_\alpha(Y)^\dagger$, since it can always be extended to a Lie-Poisson system on $\mathfrak{gl}(n,\mathbb{C})$ or possibly a smaller reductive algebra that contains $S$. This extension is required because $S$ is not necessarily the dual of a Lie algebra. By defining the Hamiltonian to be constant on the orthogonal complement of $S$, the system naturally foliates into invariant affine subspaces generated by $S$. The Toda lattice is an important example of a deterministic isospectral flow that can be made Lie-Poisson in this way, see \cite{modin2020lie}. Numerical integrators for stochastic isospectral Lie-Poisson equations are discussed in \cite{ephrati2024exponential} and have the benefit that they do not rely on algebra-to-group maps, which are numerically expensive to evaluate in high dimensions. 

If the drift Hamiltonian is given by a bi-invariant metric, the gradient of the drift Hamiltonian aligns with $Y_t$ and the drift part of the isospectral flow vanishes. This means that there are no nontrivial isospectral deformations of a bi-invariant metric. This fact helps us in the next section, where it simplifies the kinetic Langevin equations used for sampling from Gibbs measures.

\section{Sampling from Gibbs measures}\label{sec:sampling}
In this section we discuss approaches to sampling from Gibbs measures on symplectic manifolds and Lie groups. In the previous section we recalled the key fact that deterministic symplectic mechanics have the Gibbs measure as their invariant measure. We then constructed stochastic symplectic mechanics by means of Malliavin's transfer principle. In this section, we first show that the Gibbs measure satisfies a maximum entropy principle on symplectic manifolds and proceed to study the invariant measure of stochastic symplectic mechanics. We show that it cannot be the Gibbs measure and by introducing double-bracket dissipation, we show that stochastic symplectic mechanics with dissipation has the Gibbs measure again as its invariant measure.

Let $\mathcal{H}_0:G\times\mathfrak{g}^*\to\mathbb{R}$ be a Hamiltonian, then the Gibbs measure $\mathbb{P}_\infty$ is the probability measure defined as
\begin{equation}
    \mathbb{P}_\infty = Z^{-1}e^{-\beta \mathcal{H}_0}\mu(dg)\lambda(dm), \qquad Z = \int_{G\times\mathfrak{g}^*} e^{-\beta \mathcal{H}_0}\mu(dg)\lambda(dm),
\end{equation}
where the inverse temperature $\beta >0$ is a free parameter. The Gibbs measure $\mathbb{P}_\infty$ follows equivalently as the maximal entropy probability measure among all configurations with fixed average energy, as given by the following maximum entropy principle that holds for any symplectic manifold.

\begin{proposition}\label{prop:gibbsvar}
Let $(M,\omega)$ be a $2n$-dimensional symplectic manifold and let $|\omega^n|$ denote the completion of the volume form $\omega^n$ to a Borel measure. The Gibbs measure is a solution to the constrained variational principle
    \begin{equation}
        \mathbb{P}_\infty = \underset{\mu \gg \lambda}{\mathrm{arg}\max}\{-\mathcal{H}(\mu|\lambda)\} \,,\quad \text{ such that}\quad  \int_{M} \mathcal{H}_0 \,d\mu =c \quad\text{and}\quad  \int_{M} d\mu = 1 \,,
    \end{equation}
    where $c\in\mathbb{R}$, $\mathcal{H}(\mu|\lambda) := \int_{M} \frac{d\mu}{d\lambda}\log \frac{d\mu}{d\lambda}d\lambda$ is the relative entropy (or Kullback-Leibler divergence) of the measure $\mu$ with respect to a reference measure $\lambda$, and $\mathbb{P}_\infty = Z^{-1}e^{-\beta\mathcal{H}_0}|\omega^n|$ with $Z=\int_M e^{-\beta\mathcal{H}_0}|\omega^n|$. 
\end{proposition}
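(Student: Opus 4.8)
The plan is to reduce the optimisation over measures to an optimisation over probability densities and then obtain maximality directly from the nonnegativity of relative entropy (the Gibbs inequality), which avoids having to make an infinite-dimensional calculus of variations argument rigorous.

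First I would observe that any $\mu$ that is not absolutely continuous with respect to $\lambda := |\omega^n|$ has $\mathcal{H}(\mu|\lambda) = +\infty$ and hence cannot maximise $-\mathcal{H}(\mu|\lambda)$; so we may write $\mu = \rho\,\lambda$ with $\rho = d\mu/d\lambda \ge 0$, and the problem becomes: maximise the differential entropy $S[\rho] := -\int_M \rho\log\rho\,|\omega^n|$ subject to $\int_M \mathcal{H}_0\,\rho\,|\omega^n| = c$ and $\int_M \rho\,|\omega^n| = 1$. A formal Lagrange-multiplier computation --- introducing multipliers $\beta$ and $\alpha$ for the two constraints and setting the first variation of $S[\rho] - \beta\big(\int_M \mathcal{H}_0\rho\,|\omega^n| - c\big) - \alpha\big(\int_M \rho\,|\omega^n| - 1\big)$ to zero --- gives $-\log\rho - 1 - \beta\mathcal{H}_0 - \alpha = 0$, that is $\rho = Z^{-1}e^{-\beta\mathcal{H}_0}$ with $Z = \int_M e^{-\beta\mathcal{H}_0}|\omega^n|$ determined by normalisation and $\beta$ determined implicitly by the energy constraint through $c = -\partial_\beta \log Z$. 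I would present this computation as motivation, identifying the candidate optimiser as $\mathbb{P}_\infty$.

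To make the maximality rigorous I would invoke the Gibbs inequality: for any probability density $\rho$ with $\int_M \mathcal{H}_0\rho\,|\omega^n| = c$, the relative entropy of $\rho\,\lambda$ with respect to $\mathbb{P}_\infty$ is nonnegative, and expanding it gives
\[
 0 \;\le\; \int_M \rho\log\frac{\rho}{Z^{-1}e^{-\beta\mathcal{H}_0}}\,|\omega^n| \;=\; -S[\rho] + \log Z + \beta\!\int_M \mathcal{H}_0\rho\,|\omega^n| \;=\; -S[\rho] + \log Z + \beta c \,.
\]
Hence $S[\rho] \le \log Z + \beta c$ for every admissible $\rho$, with equality exactly when $\rho = Z^{-1}e^{-\beta\mathcal{H}_0}$ (this is the equality case in Jensen's inequality, equivalently strict convexity of $t\mapsto t\log t$). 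Since a direct computation gives $S\big[Z^{-1}e^{-\beta\mathcal{H}_0}\big] = \log Z + \beta c$, this shows $\mathbb{P}_\infty$ is the unique maximiser, which is the claim; the nonnegativity of relative entropy itself I would either cite or derive in one line from Jensen applied to the convex function $-\log$.

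The main obstacle is not the inequality chain but the conditions that make the problem well posed: one needs $Z = \int_M e^{-\beta\mathcal{H}_0}|\omega^n| < \infty$, and the level $c$ must be attainable --- it should lie in the interior of the range of $\mu\mapsto\int_M\mathcal{H}_0\,d\mu$ over admissible $\mu$ --- so that a finite multiplier $\beta$ exists. The cleanest route is to fix $\beta>0$ first, with $Z<\infty$ assumed (automatic when $M$ is compact, as for a compact Lie group), and then define $c := \int_M \mathcal{H}_0\,d\mathbb{P}_\infty$, so that feasibility holds by construction. I would state these integrability and feasibility hypotheses explicitly and note that they are vacuous in the compact setting of primary interest here.
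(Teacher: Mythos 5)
Your proposal is correct and follows essentially the same route as the paper's proof: a formal Lagrange-multiplier computation to identify the candidate $\rho = Z^{-1}e^{-\beta\mathcal{H}_0}$, followed by the nonnegativity of the relative entropy $\mathcal{H}(\mu\,|\,\mathbb{P}_\infty)$ to certify maximality (the paper writes this as $-\mathcal{H}(\mu_*|\lambda)+\mathcal{H}(\mu|\lambda)=\mathcal{H}(\mu|\mu_*)\ge 0$, which is your inequality chain rearranged). Your additional remarks on the equality case, the integrability of $e^{-\beta\mathcal{H}_0}$, and the feasibility of the energy level $c$ are sensible refinements but do not change the argument.
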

Here $d\mu/d\lambda$ denotes the Radon-Nikodym derivative of $\mu$ with respect to $\lambda$. The first constraint sets the average energy level of the system and the second constraint enforces that $\mu$ is a probability measure. A natural choice for the reference measure on $G\times\mathfrak{g}^*$ is the Liouville measure $|\omega^n|=\mu(dg)\lambda(dm)$.
\begin{proof}
    The proof of this follows the argument given in \cite{ST2023}. Given that $\mu\gg \lambda$, without loss of generality, we can set $\mu = \phi\,\lambda$ for some positive function $\phi:M\to\mathbb{R}$. By introducing Lagrange multipliers $\beta,\gamma$ to enforce the constraints, we formulate the constrained optimisation problem as a minimisation problem of the following functional
    \begin{equation}
        S[\phi,\beta,\gamma] = \int_M\phi\log\phi \,d\lambda + \beta\left(\int_M \mathcal{H}_0 \phi\,d\lambda - c\right) + \gamma\left(\int_M \phi\, d\lambda - 1\right).
    \end{equation}
    Requiring the first variation of $S$ to vanish implies the equations
    \begin{equation}
    \begin{aligned}
        \log\phi+1+\beta \mathcal{H}_0 + \gamma = 0,\\
        \int_M \mathcal{H}_0\phi\,d\lambda = c,\\
        \int_M\phi\,d\lambda = 1.
    \end{aligned}
    \end{equation}
    From the first equation we obtain
    \begin{equation}
        \phi = Z^{-1}e^{-\beta \mathcal{H}_0}
    \end{equation}
    with $Z=e^{1+\gamma}$ and the third equation then implies that $Z=\int_M e^{-\beta \mathcal{H}_0}d\lambda$. Let $\mu_*$ be the measure defined by the above choice of $\phi$. We now show that $\mu_*$ is the maximal entropy measure. For any probability measure $\mu\gg\lambda$ satisfying $\int_M \mathcal{H}_0d\mu = c$, we have
    \begin{equation}
        \begin{aligned}
            -\mathcal{H}(\mu_*|\lambda)+\mathcal{H}(\mu|\lambda) &= -\int_M\frac{d\mu_*}{d\lambda}\log\frac{d\mu_*}{d\lambda} d\lambda + \int_M \frac{d\mu}{d\lambda}\log \frac{d\mu}{d\lambda}d\lambda \\
            &= \int_M (\log Z + \beta \mathcal{H}_0)d\mu_* + \int_M\log \frac{d\mu}{d\lambda}d\mu\\
            &= \log Z +\beta c + \int_M\log \frac{d\mu}{d\lambda}d\mu\\
            &= \int_M(\log Z + \beta \mathcal{H}_0)d\mu + \int_M\log\frac{d\mu}{d\lambda}d\mu\\
            &= -\int_M \log \frac{d\mu_*}{d\lambda} d\mu + \int_M\log \frac{d\mu}{d\lambda}d\mu\\
            &= \int_M\log\frac{d\mu_*}{d\mu} d\mu\\
            &= \mathcal{H}(\mu|\mu_*).
        \end{aligned}
    \end{equation}
    Since the relative entropy is always positive, we have shown that $-\mathcal{H}(\mu_*|\lambda)+\mathcal{H}(\mu|\lambda) = \mathcal{H}(\mu|\mu_*)>0$, which proves that $\mu_*$ is the maximal entropy measure.
\end{proof}

\subsection{Invariant measure for stochastic symplectic dynamics}
Deterministic Hamiltonian systems on symplectic manifolds have Gibbs measures as their invariant measure, which motivates the use of Hamiltonian Monte-Carlo methods. In these methods, one implements a numerical method that preserves the symplectic structure, such as the St\"ormer-Verlet method or implicit midpoint rule, followed by a Metropolis-Hastings step. Since symplectic Hamiltonian systems are time-reversible, distant proposals tend to be accepted with high probability, which in turn decreases the correlation between samples. However, as shown in \cite{duncan2016variance}, ergodic irreversible diffusions tend to converge faster to their target distributions. To this end, several irreversible approaches based on Langevin dynamics have been proposed. In \cite{arnaudon2019irreversible} a Langevin-type Markov chain Monte-Carlo algorithm is proposed for sampling from Gibbs measures on Lie groups motivated by a stochastic version of symplectic mechanics. 

Through Malliavin's transfer principle, in Section \ref{sec:stochgeomech} we obtained a stochastic version of Hamilton's equations on the cotangent bundle of a Lie group that preserves the underlying geometric structure. 
In case the family of $\mathbb{R}$-semimartingales $(Z^\alpha)^n _{\alpha=0} = (t,W_t^1,W_t^2,\hdots, W_t^n)$, the It\^o form of \eqref{eq:stochhamcan} is given by
\begin{equation}\label{eq:stochhamcanito}
    \boldsymbol{d}S_t = \left(X_{\mathcal{H}_0}(S_t)+ \frac{1}{2}X_{\mathcal{H}_i}X_{\mathcal{H}_i}(S_t)\right)\boldsymbol{d}t + X_{\mathcal{H}_i}(S_t)\boldsymbol{d}W_t^i.
\end{equation}
In the It\^o interpretation as above, we can compute the generator of the process $S_t$. We denote the generator by $\mathcal{L}$, which should not be confused with the Lagrangian. By definition, the generator of the process is given by 
\begin{equation}
    \mathcal{L}\mathcal{F}(S) = \lim_{\epsilon\to 0}\frac{\mathbb{E}[\mathcal{F}(S_\epsilon)\,|\, S_0 = S] - \mathcal{F}(S)}{\epsilon}.
\end{equation}
The expectation eliminates the martingale term in \eqref{eq:stochhamcanito} and using the relations between Hamiltonian vector fields and Poisson brackets in \eqref{eq:poissonbracket}, it follows that the generator (or forward Kolmogorov operator) is given by
\begin{equation}
    \mathcal{L}\mathcal{F} = \{\mathcal{H}_0,\mathcal{F}\} + \frac{1}{2}\{\mathcal{H}_i,\{\mathcal{H}_i,\mathcal{F}\}\}.
\end{equation}
Note that in absence of noise, i.e., when the diffusion Hamiltonians $\mathcal{H}_i$ are zero, the generator is self-adjoint with respect to the $L^2$-pairing with the measure given by the symplectic volume form. Hence, it follows that the Fokker-Planck equation, which governs the probability density function $\rho$, involves the same operator. This in turn means that we obtain the right-hand side of the Liouville equation \eqref{eq:liouville}, which is known to have the Gibbs measure as its equilibrium solution. It follows that in general, Malliavin's transfer principle, while preserving the geometric structure, does not preserve the invariant measure of the deterministic system. This can be remedied by introducing a special type of dissipation into the system. This type of dissipation is known as double-bracket dissipation (see \cite{brockett1991dynamical}, \cite{bloch1996euler} and references therein for details) and the Stratonovich SDE \eqref{eq:stochhamcan} with $(Z^\alpha)^n _{\alpha=0} = (t,W_t^1,W_t^2,\hdots, W_t^n)$ changes as follows
\begin{equation}\label{eq:dbdstandard}
    \boldsymbol{d}S_t = \left(X_{\mathcal{H}_0}(S_t) + \frac{\beta}{2}\{\mathcal{H}_0,\mathcal{H}_i\}X_{\mathcal{H}_i}(S_t)\right)\boldsymbol{d}t + X_{\mathcal{H}_i}(S_t)\circ\boldsymbol{d}W_t^i
\end{equation}
The It\^o form is then given by
\begin{equation}\label{eq:dbdito}
    \boldsymbol{d}S_t = \left(X_{\mathcal{H}_0}(S_t) + \frac{\beta}{2}\{\mathcal{H}_0,\mathcal{H}_i\}X_{\mathcal{H}_i}(S_t) + \frac{1}{2}X_{\mathcal{H}_i}X_{\mathcal{H}_i}(S_t)\right)\boldsymbol{d}t + X_{\mathcal{H}_i}(S_t)\boldsymbol{d}W_t^i.
\end{equation}
Balancing the effect of Hamiltonian noise with double-bracket dissipation was considered in \cite{arnaudon2018noise} in the setting of coadjoint orbits and in \cite{arnaudon2019irreversible} in the setting of Lie groups.
\begin{proposition}\label{prop:langevin}
    Equation \eqref{eq:dbdito} has the Gibbs measure $\mathbb{P}_\infty = e^{-\beta\mathcal{H}_0}|\omega^n|$ as its invariant measure on a symplectic manifold without boundary.
\end{proposition}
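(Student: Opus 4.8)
The plan is to show that $\mathbb{P}_\infty$ is stationary by verifying that $\int_M \mathcal{L}\mathcal{F}\,d\mathbb{P}_\infty = 0$ for every $\mathcal{F}\in C^\infty(M)$, where $\mathcal{L}$ is the generator of \eqref{eq:dbdito}; equivalently, that the Fokker--Planck operator $\mathcal{L}^*$ annihilates the density $\rho := e^{-\beta\mathcal{H}_0}$. First I would record the generator. Exactly as in the passage leading to \eqref{eq:stochhamcanito}, the It\^o--Stratonovich correction term $\tfrac12 X_{\mathcal{H}_i}X_{\mathcal{H}_i}(S_t)$ appearing in \eqref{eq:dbdito} recombines with the second-order part of the generator so that the diffusion contributes $\tfrac12 X_{\mathcal{H}_i}(X_{\mathcal{H}_i}\mathcal{F})$, and together with the two drift terms and the identification $X_{\mathcal{H}}\mathcal{F} = \{\mathcal{H},\mathcal{F}\}$ from \eqref{eq:poissonbracket} this gives
\begin{equation*}
    \mathcal{L}\mathcal{F} = \{\mathcal{H}_0,\mathcal{F}\} + \frac{\beta}{2}\{\mathcal{H}_0,\mathcal{H}_i\}\{\mathcal{H}_i,\mathcal{F}\} + \frac{1}{2}\{\mathcal{H}_i,\{\mathcal{H}_i,\mathcal{F}\}\}.
\end{equation*}

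The two ingredients for the integral identities are: (i) the Poisson bracket is a derivation in each argument, so $\{\mathcal{H},\mathcal{G}\mathcal{K}\} = \{\mathcal{H},\mathcal{G}\}\mathcal{K} + \mathcal{G}\{\mathcal{H},\mathcal{K}\}$; and (ii) Liouville's theorem, i.e. every Hamiltonian vector field is divergence-free with respect to $|\omega^n|$, so that on a symplectic manifold without boundary $\int_M \{\mathcal{H},\mathcal{G}\}\,|\omega^n| = \int_M X_{\mathcal{H}}(\mathcal{G})\,|\omega^n| = 0$ by Stokes' theorem. I would also compute the elementary facts $\{\mathcal{H}_0,\rho\} = -\beta\rho\{\mathcal{H}_0,\mathcal{H}_0\} = 0$ and $\{\mathcal{H}_i,\rho\} = -\beta\rho\{\mathcal{H}_i,\mathcal{H}_0\} = \beta\rho\{\mathcal{H}_0,\mathcal{H}_i\}$.

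Then I would treat the three terms of $\int_M \mathcal{L}\mathcal{F}\,\rho\,|\omega^n|$ in turn. For the drift term, the derivation property and $\{\mathcal{H}_0,\rho\}=0$ give $\{\mathcal{H}_0,\mathcal{F}\}\rho = \{\mathcal{H}_0,\mathcal{F}\rho\}$, which integrates to zero by (ii). For the noise term, set $\mathcal{G} = \{\mathcal{H}_i,\mathcal{F}\}$; the derivation property gives $\{\mathcal{H}_i,\mathcal{G}\}\rho = \{\mathcal{H}_i,\mathcal{G}\rho\} - \mathcal{G}\{\mathcal{H}_i,\rho\}$, and integrating, using (ii) to kill the first term on the right and the formula for $\{\mathcal{H}_i,\rho\}$, yields $\int_M \{\mathcal{H}_i,\{\mathcal{H}_i,\mathcal{F}\}\}\rho\,|\omega^n| = -\beta\int_M \{\mathcal{H}_0,\mathcal{H}_i\}\{\mathcal{H}_i,\mathcal{F}\}\rho\,|\omega^n|$. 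Hence the $\tfrac12\{\mathcal{H}_i,\{\mathcal{H}_i,\mathcal{F}\}\}$ term integrates to $-\tfrac{\beta}{2}\int_M \{\mathcal{H}_0,\mathcal{H}_i\}\{\mathcal{H}_i,\mathcal{F}\}\rho\,|\omega^n|$, which is cancelled exactly by the double-bracket dissipation term $\tfrac{\beta}{2}\int_M \{\mathcal{H}_0,\mathcal{H}_i\}\{\mathcal{H}_i,\mathcal{F}\}\rho\,|\omega^n|$. Summing the three contributions gives $0$, so $\mathbb{P}_\infty$ (and its normalisation $Z^{-1}\mathbb{P}_\infty$) is invariant.

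The computation itself is a short sequence of integrations by parts, so I do not expect a serious obstacle; the points that need care are getting the generator right (the recombination of the It\^o correction with the second-order term, so that $X_{\mathcal{H}_i}$ truly appears as a first-order operator in $\tfrac12 X_{\mathcal{H}_i}(X_{\mathcal{H}_i}\cdot)$) and justifying the integration-by-parts identity $\int_M\{\mathcal{H},\mathcal{G}\}\,|\omega^n|=0$, which is precisely where the hypothesis of no boundary enters via Liouville's theorem and Stokes. A fully rigorous version would additionally require specifying the domain/core of $\mathcal{L}$ and the requisite regularity of the Hamiltonians (cf.\ the $C^{3+\varepsilon}$/$C^{4+\varepsilon}$ conditions mentioned earlier), but the structural content is the exact balance, enforced by the factor $\beta/2$, between the noise-induced term in the Fokker--Planck equation and the double-bracket dissipation.
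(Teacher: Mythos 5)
Your proposal is correct and follows essentially the same route as the paper: the same Poisson--bracket form of the generator, the same use of Liouville's theorem to integrate by parts on the boundaryless manifold, and the same exact cancellation between the It\^o correction term and the double-bracket dissipation. The only cosmetic difference is that you verify stationarity in weak form, showing $\int_M \mathcal{L}\mathcal{F}\,d\mathbb{P}_\infty = 0$ for all test functions $\mathcal{F}$, whereas the paper first writes out the adjoint $\mathcal{L}^*$ explicitly and then checks $\mathcal{L}^*e^{-\beta\mathcal{H}_0}=0$ pointwise.
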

\begin{proof}
    The generator of \eqref{eq:dbdito} can be directly read off and is given by $\mathcal{L}\mathcal{F} = \{\mathcal{H}_0,\mathcal{F}\} + \frac{\beta}{2}\{\mathcal{H}_0,\mathcal{H}_i\}\{\mathcal{H}_i,\mathcal{F}\} + \frac{1}{2}\{\mathcal{H}_i,\{\mathcal{H}_i,\mathcal{F}\}\}$. We can compute the adjoint of the generator by repeatedly using the Leibniz rule and the skew-symmetry of the Poisson bracket
    \begin{equation}
    \begin{aligned}
        \int_M \mathcal{F}\mathcal{L}^*\rho |\omega^n| &= \int_M \rho \mathcal{L}\mathcal{F}|\omega^n|\\
        &= \int_M\left(\rho\{\mathcal{H}_0,\mathcal{F}\} + \frac{\beta}{2}\rho\{\mathcal{H}_0,\mathcal{H}_i\}\{\mathcal{H}_i,\mathcal{F}\} - \frac{1}{2}\rho\{\mathcal{H}_i,\{\mathcal{H}_i,\mathcal{F}\}\}\right) |\omega^n| \\
        &= \int_M \left(\{\mathcal{H}_0, \rho \mathcal{F}\} + \frac{\beta}{2}\{\mathcal{H}_i,\rho \mathcal{F}\{\mathcal{H}_0,\mathcal{H}_i\}\} - \frac{1}{2}\{\mathcal{H}_i,\rho\{\mathcal{H}_i,\mathcal{F}\}\}+\frac{1}{2}\{\mathcal{H}_i,\mathcal{F}\{\mathcal{H}_i,\rho\}\}\right)|\omega^n|\\
        &\quad + \int_M \left(\mathcal{F}\{\rho,\mathcal{H}_0\} +\frac{\beta}{2}\mathcal{F}\{\rho\{\mathcal{H}_0,\mathcal{H}_i\},\mathcal{H}_i\} + \frac{1}{2}\mathcal{F}\{\{\rho,\mathcal{H}_i\},\mathcal{H}_i\}\right)|\omega^n|
    \end{aligned}
    \end{equation}
    The manifold by assumption has empty boundary, so integration by parts does not give nontrivial boundary terms. By definition, the Lie derivative of the symplectic form along a Hamiltonian vector field vanishes, i.e., for $\mathcal{F}\in C^\infty(M)$, $\pounds_{X_\mathcal{F}}\omega = 0$, which implies Liouville's theorem: $\pounds_{X_\mathcal{F}}\omega^n = 0$. Since the Poisson bracket of any two functions $\mathcal{F},\mathcal{G}\in C^\infty(M)$ is itself a smooth function on $M$, it follows that $\pounds_{\{\mathcal{F},\mathcal{G}\}}\omega = 0$. Hence the interior product of the Poisson bracket with the symplectic volume form vanishes: it holds that $\int_M \{\mathcal{F},\mathcal{G}\}\omega^n = 0$. Therefore, the adjoint of the generator is
    \begin{equation}
        \mathcal{L}^*\rho = \{\rho,\mathcal{H}_0\} +\frac{\beta}{2}\{\rho\{\mathcal{H}_0,\mathcal{H}_i\},\mathcal{H}_i\} + \frac{1}{2}\{\{\rho,\mathcal{H}_i\},\mathcal{H}_i\}.
    \end{equation}
    Taking $\rho_\infty = e^{-\beta\mathcal{H}_0}$, we compute the Fokker-Planck equation
    \begin{equation}
    \begin{aligned}
        \frac{\partial}{\partial t}\rho_\infty &= \mathcal{L}^*\rho_\infty = \{\rho_\infty,\mathcal{H}_0\}+\frac{\beta}{2}\{\rho_\infty\{\mathcal{H}_0,\mathcal{H}_i\},\mathcal{H}_i\} + \frac{1}{2}\{\{\rho_\infty,\mathcal{H}_i\},\mathcal{H}_i\}\\
        &= -\beta e^{-\beta\mathcal{H}_0}\{\mathcal{H}_0,\mathcal{H}_0\}+\frac{\beta}{2}\{e^{-\beta\mathcal{H}_0}\{\mathcal{H}_0,\mathcal{H}_i\},\mathcal{H}_i\} - \frac{\beta}{2}\{e^{-\beta\mathcal{H}_0}\{\mathcal{H}_0,\mathcal{H}_i\},\mathcal{H}_i\}\\
        &= 0.
    \end{aligned}
    \end{equation}
\end{proof}
The above proof shows that double-bracket dissipation term precisely cancels the It\^o correction for the Gibbs measure and that this works no matter what one chooses for the Hamiltonians. Indeed, in case there is no noise, the double bracket term vanishes and if the diffusion Hamiltonians commute with the drift Hamiltonian, then the double bracket term also vanishes. Hence, the above result can be viewed as a type of fluctuation-dissipation theorem. Note that the inverse temperature $\beta$ is not a free parameter, as it explicitly appears in the double-bracket term.

\begin{remark}
    The above proof using abstract Poisson brackets can also be performed directly, though it should be noted that for $T^*G\simeq G\times\mathfrak{g}^*$ a single Poisson bracket contains three terms (see \eqref{eq:poissonbracket}). The It\^o-Stratonovich correction then in general consists of nine terms, making the computation rather unwieldy.
\end{remark}

We now consider several example applications of this stochastic symplectic framework. All examples have the Gibbs measure as their invariant measure since they are all special cases of \eqref{eq:dbdstandard} and Proposition \ref{prop:langevin}. In all of the cases below, we take the diffusion Hamiltonians to be linear in the Darboux coordinates, but more general choices are of course possible.

\subsection{Examples}
\begin{example}[Momentum Langevin on reductive Lie groups]
    Let $G$ be a compact semisimple Lie group with bi-invariant metric $Q$. Choose the semimartingale terms as $(Z^\alpha)_{\alpha=0}^n = (t,W_t^1,\hdots, W_t^n)$. Let the drift Hamiltonian be $\mathcal{H}_0(g,m) = \frac{1}{2}Q(m,m) + V(g)$ where $V:G\to\mathbb{R}$ is the potential. Let $\{X_i\}_{i=1}^n$ be a basis for $\mathfrak{g}$ and take the diffusion Hamiltonians to be $\mathcal{H}_i(g,m) = -\sqrt{2\gamma}\, \mathrm{Tr}(gX_i)$ with $\gamma>0$. 

    The variational derivatives of the Hamiltonians are computed to be $\frac{\delta\mathcal{H}_0}{\delta m} = m$, $\frac{\delta\mathcal{H}_0}{\delta g} = \nabla V$, $\frac{\delta\mathcal{H}_i}{\delta m} = 0$ and $\frac{\delta\mathcal{H}_i}{\delta g} = -\sqrt{2\gamma}\,X_i$. For the double-bracket dissipation term, we compute $\{\mathcal{H}_0,\mathcal{H}_i\} = \sqrt{2\gamma} \,Q(m,X_i)$. Substituting this into \eqref{eq:dbdstandard} yields
    \begin{equation}
        \begin{aligned}
            \boldsymbol{d}m &= \big(-\beta\gamma m - (dL_{g^{-1}})_{g}\nabla V\big)\boldsymbol{d}t + \sqrt{2\gamma}\,X_i\,\boldsymbol{d}W^i_t\\
            \boldsymbol{d}g &= (dL_g)_e\,m\,\boldsymbol{d}t,
        \end{aligned}
    \end{equation}
    which one recognises as the usual kinetic Langevin equation with additive noise in the momentum. The basis elements $X_i$ turn the family of $\mathbb{R}$-Brownian motions into a Lie algebra Brownian motion. See \cite{kong2024convergence} for a direct proof of the fact that the invariant measure of momentum Langevin equation is the Gibbs measure.
\end{example}

\begin{example}[Momentum Langevin on Euclidean space]
Let $G=\mathbb{R}^n$ with the Euclidean metric and choose the semimartingale terms as $(Z^\alpha)^n _{\alpha=0} = (t,W_t^1,W_t^2,\hdots, W_t^n)$. Let the drift Hamiltonian be $\mathcal{H}_0(p_1,\hdots, p_n,q^1,\hdots, q^n) = \sum_{i=1}^n \frac{1}{2}p_i^2 + V(q^1,\hdots,q^n)$ and choose the diffusion Hamiltonians to be given by $\mathcal{H}_i(p_1,\hdots,p_n,q^1,\hdots, q^n) = U_i(q^1,\hdots, q^n)$ with $U_i:\mathbb{R}^n\to\mathbb{R}$ smooth. 

The partial derivatives of the Hamiltonians are given by $\frac{\partial\mathcal{H}_0}{\partial p_i} = p_i$, $\frac{\partial\mathcal{H}_0}{\partial q^i} = \frac{\partial V}{\partial q^i}$, $\frac{\partial\mathcal{H}_j}{\partial p_i} = 0$ and $\frac{\partial\mathcal{H}_j}{\partial q^i} = \frac{\partial U_j}{\partial q^i}$. Further, $\{\mathcal{H}_0,\mathcal{H}_j\} = -\sum_{i=1}^n p_i\frac{\partial U_j}{\partial q_i}$. Substituting this into \eqref{eq:dbdstandard} yields in vectorised form with $\boldsymbol{p}=(p_1,\hdots, p_n)$, $\boldsymbol{q} = (q^1,\hdots, q^n)$, $\boldsymbol{U} = (U_1,\hdots, U_n)$, and $\boldsymbol{W}_t = (W_t^1,\hdots, W_t^n)$ as
\begin{equation}
    \begin{aligned}
        \boldsymbol{dp} &= -\nabla_{\boldsymbol{q}} V\,\boldsymbol{d}t-\frac{\beta}{2}\nabla_{\boldsymbol{q}}\boldsymbol{U}(\nabla_{\boldsymbol{q}}\boldsymbol{U})^T\boldsymbol{p}\,\boldsymbol{d}t + \nabla_{\boldsymbol{q}} \boldsymbol{U}\circ \boldsymbol{dW}_t,\\
        \boldsymbol{dq} &= \boldsymbol{p}\, \boldsymbol{d}t.
    \end{aligned}
\end{equation}
Note that when $\boldsymbol{U}$ vanishes, one simply has deterministic symplectic dynamics as expected. Upon splitting the above equations into a deterministic Hamiltonian part and a space-dependent Langevin part, one obtains the Euclidean version of the Langevin Markov chain Monte Carlo algorithm presented in \cite{arnaudon2019irreversible}. Furthermore, choosing the diffusion Hamiltonians as $\mathcal{H}_i = -\sqrt{2\gamma}\,q_i$ yields
\begin{equation}
    \begin{aligned}
        \boldsymbol{dp} &= (-\beta\gamma\boldsymbol{p} - \nabla_{\boldsymbol{q}} V)\boldsymbol{d}t + \sqrt{2\gamma}\,\boldsymbol{dW}_t,\\
        \boldsymbol{dq} &= \boldsymbol{p}\,\boldsymbol{d}t,
    \end{aligned}
\end{equation}
which shows that on $\mathbb{R}^n$ double-bracket dissipation reproduces the kinetic Langevin equation. 
\end{example}

\begin{example}[Position Langevin on reductive Lie groups]
Let $G$ be a compact semisimple Lie group with bi-invariant metric $Q$. Choose the semimartingale terms as $(Z^\alpha)^n _{\alpha=0} = (t,W_t^1,W_t^2,\hdots, W_t^n)$. Let the drift Hamiltonian be $\mathcal{H}_0(g,m) = \frac{1}{2}Q(m,m) + V(g)$ where $V:G\to\mathbb{R}$ is a potential. Let $\{X_i\}_{i=1}^n$ be a basis for $\mathfrak{g}$ and take the diffusion Hamiltonians to be $\mathcal{H}_i(g,m) = \sqrt{2\gamma}\,Q(m,X_i)$. The variational derivatives can be computed to be $\frac{\delta\mathcal{H}_0}{\delta m} = m$, $\frac{\delta\mathcal{H}_0}{\delta g} = \nabla V$, $\frac{\delta\mathcal{H}_i}{\delta m} = \sqrt{2\gamma}\,X_i$ and $\frac{\delta\mathcal{H}_0}{\delta g} = 0$. Further, $\{\mathcal{H}_0,\mathcal{H}_i\} = \sqrt{2\gamma}\,Q(X_i,(dL_{g^{-1}})_g\nabla V)$. Substituting these expressions into \eqref{eq:dbdstandard} leads to
\begin{equation}
    \begin{aligned}
        \boldsymbol{d}m &= \big(-(dL_{g^{-1}})_g\nabla V+\beta\gamma Q(X_i,(dL_{g^{-1}})_g\nabla V)[X_i,m]\big)\,\boldsymbol{d}t + \sqrt{2\gamma}[X_i,m]\circ \boldsymbol{d}W_t^i,\\
        \boldsymbol{d}g &= (dL_g)_e\Big(\big(m - \beta\gamma \,\nabla V\big)\,\boldsymbol{d}t + \sqrt{2\gamma}\,X_i\, \boldsymbol{d}W_t^i\Big),
    \end{aligned}
\end{equation}
where $[X_i,m]$ measures the extent to which to momentum does not commute with the basis of the Lie algebra and $Q(X_i,(dL_{g^{-1}})_g\nabla V)X_i = \nabla V$ projects the gradient of the potential onto the Lie algebra basis. Note that on reductive Lie groups one can identify the coadjoint action with minus the adjoint action. Furthermore, the Lie-Poisson drift, i.e. $\mathrm{ad}^*_{\delta \mathcal{H}_0/\delta m}m$, is isospectral. Since the drift Hamiltonian is the kinetic energy associated with a bi-invariant metric, this term vanishes. Note also that the noise in the evolution of $g$ is precisely the Riemannian Brownian motion on a reductive Lie group with a bi-invariant metric that we constructed in Section \ref{sec:eem}.
\end{example}

\begin{example}[Position Langevin on Euclidean space]
Let $G=\mathbb{R}^n$ with the Euclidean metric and choose the semimartingale terms as $(Z^\alpha)^n _{\alpha=0} = (t,W_t^1,W_t^2,\hdots, W_t^n)$. Let the drift Hamiltonian be $\mathcal{H}_0(p_1,\hdots, p_n,q^1,\hdots, q^n) = \sum_{i=1}^n \frac{1}{2}p_i^2 + V(q^1,\hdots, q^n)$, where $V:\mathbb{R}^n\to\mathbb{R}$ is a potential function, and take the diffusion Hamiltonians as $\mathcal{H}_i(p_1,\hdots,p_n,q^1,\hdots, q^n) = \sqrt{2\gamma}\,p_i$. The partial derivatives of the Hamiltonians are given by $\frac{\partial\mathcal{H}_0}{\partial p_i} = p_i$, $\frac{\partial\mathcal{H}_0}{\partial q^i} = \frac{\partial V}{\partial q^i}$, $\frac{\partial \mathcal{H}_j}{\partial p_i} = \sqrt{2\gamma}\,\delta^i_j$ and $\frac{\partial\mathcal{H}_j}{\partial q^i} = 0$. Here $\delta^i_j$ denotes the Kronecker delta. Further, $\{\mathcal{H}_0,\mathcal{H}_i\} = -\sqrt{2\gamma}\frac{\partial V}{\partial q^i}$. Substituting these expressions into \eqref{eq:dbdstandard} yields in vectorised form with $\boldsymbol{p} = (p_1,\hdots, p_n)$, $\boldsymbol{q}=(q^1,\hdots, q^n)$ and $\boldsymbol{W}_t = (W_t^1,\hdots , W_t^n)$ as
\begin{equation}\label{eq:euclideandbd}
    \begin{aligned}
        \boldsymbol{dp} &= -\nabla_{\boldsymbol{q}} V\boldsymbol{d}t,\\
        \boldsymbol{dq} &= (\boldsymbol{p} - \beta\gamma\nabla_{\boldsymbol{q}} V)\boldsymbol{d}t + \sqrt{2\gamma}\,\boldsymbol{dW}_t.
    \end{aligned}
\end{equation}
This shows that stochastic symplectic dynamics on $\mathbb{R}^{2n}$ can be used to sample from distributions on $\mathbb{R}^n$ with dissipation determined by the potential.
\end{example}

\begin{example}[Symplectic Langevin on reductive Lie groups]
    Consider the stochastic equations of symplectic mechanics \eqref{eq:stochsymplectic} on the symplectic manifold $G\times\mathfrak{g}^*$ with $G$ an $n$-dimensional compact semisimple Lie group with bi-invariant metric $Q$. Choose the semimartingale terms as $(Z^\alpha)_{\alpha=0}^{2n} = (t, W_t^1, \hdots, W_t^n, \widetilde{W}_t^1,\hdots \widetilde{W}_t^n)$, where all Brownian motions are mutually independent and identically distributed. Let the drift Hamiltonian be $\mathcal{H}_0(g,m) = \frac{1}{2}Q(m,m) + V(g)$, where $V:G\to\mathbb{R}$ is a potential. Let $\{X_i\}_{i=1}^n$ be a basis for $\mathfrak{g}$ and take two families of diffusion Hamiltonians as $\mathcal{H}_i(g,m) = \sqrt{2\gamma_2}\,Q(m,X_i)$, and $\widetilde{\mathcal{H}}_j(g,m) = - \sqrt{2\gamma_1}\,\mathrm{Tr}(g X_j)$. Both $\gamma_1$ and $\gamma_2$ are positive constants. 
    
    The variational derivatives can be computed to be $\frac{\delta\mathcal{H}_0}{\delta m} =m$, $\frac{\delta\mathcal{H}_0}{\delta g}=\nabla V(g)$, $\frac{\delta\mathcal{H}_i}{\delta m} = \sqrt{2\gamma_2}\,X_i$, $\frac{\delta\mathcal{H}_i}{\delta g}=0$, $\frac{\delta\widetilde{\mathcal{H}}_i}{\delta m} = 0$, $\frac{\delta\widetilde{\mathcal{H}}_i}{\delta g}=-\sqrt{2\gamma_1}\,X_i$  For the double-bracket dissipation terms, we compute $\{\mathcal{H}_0,\widetilde{\mathcal{H}}_j\} = \sqrt{2\gamma_1} Q(m, X_j)$ and $\{\mathcal{H}_0,\mathcal{H}_i\} = \sqrt{2\gamma_2} Q(X_i,(dL_{g^{-1}})_g\nabla V)$. Hence the equations are given by
    
    \begin{equation}\label{eq:dbdsymp}
        \begin{aligned}
            \boldsymbol{d}m &= \big(- (dL_{g^{-1}})_g\nabla V -\beta\gamma_1 m +\beta\gamma_2 Q(X_i,(dL_{g^{-1}})_g\nabla V)[X_i,m]\big)\boldsymbol{d}t \\
            &\qquad+ \sqrt{2\gamma_1}\,X_i\,\boldsymbol{d}\widetilde{W}_t^i + \sqrt{2\gamma_2}[X_i,m]\circ\boldsymbol{d}W_t^i,\\ 
            \boldsymbol{d}g &= (dL_g)_e\Big(\big(m - \beta\gamma_2 \nabla V \big)\boldsymbol{d}t + \sqrt{2\gamma_2}\,X_i\,\boldsymbol{d}W_t^i\Big).
        \end{aligned}
    \end{equation}
    Note that $\gamma_1=0,\gamma_2\neq 0$ yields position Langevin and $\gamma_1\neq 0, \gamma_2=0$ yields momentum Langevin.
\end{example}

\begin{example}[Symplectic Langevin on Euclidean space]
    Let $G=\mathbb{R}^n$ with the Euclidean metric and choose the semimartingale terms as $(Z^\alpha)_{\alpha=0}^{2n} = (t,W_t^1,\hdots,W_t^n,\widetilde{W}_t^1,\hdots,\widetilde{W}_t^n)$, where all the Brownian motions are mutually independent and identically distributed. Let the drift Hamiltonian be $\mathcal{H}_0(p_1,\hdots,p_n,q^1,\hdots,q^n) = \sum_{i=1}^n\frac{1}{2}p_i^2 +V(q^1,\hdots,q^n)$, where $V:\mathbb{R}^n\to\mathbb{R}$ is a potential and take the diffusion Hamiltonians as $\mathcal{H}_i(p_1,\hdots,p_n,q^1,\hdots, q^n) = \sqrt{2\gamma_2}\,p_i$ and $\widetilde{\mathcal{H}}_j(p_1,\hdots,p_n,q^1,\hdots,q^n) = -\sqrt{2\gamma_1}\,q^j$, with $\gamma_1$ and $\gamma_2$ positive constants.

    The partial derivatives of the Hamiltonians are $\frac{\partial\mathcal{H}_0}{\partial p_i} = p_i$, $\frac{\partial\mathcal{H}_0}{\partial q^i} = \frac{\partial V}{\partial q^i}$, $\frac{\partial\mathcal{H}_j}{\partial p_i} = \sqrt{2\gamma_2}\,\delta^i_j$, $\frac{\partial\mathcal{H}_j}{\partial q^i} = 0$, $\frac{\partial\widetilde{\mathcal{H}}_j}{\partial p_i} = 0$, and $\frac{\partial\widetilde{\mathcal{H}}_j}{\partial q^i} = -\sqrt{2\gamma_1}\,\delta_{ij}$. The double bracket terms are given by $\{\mathcal{H}_0,\mathcal{H}_i\} = \sqrt{2\gamma_2}\,\frac{\partial V}{\partial q^i}$, $\{\mathcal{H}_0,\widetilde{\mathcal{H}}_i\} = \sqrt{2\gamma_1}\,p_i$. Hence the equations of motion in vectorised form with $\boldsymbol{p} = (p_1,\hdots, p_n)$, $\boldsymbol{q}=(q^1,\hdots, q^n)$, $\boldsymbol{W}_t = (W_t^1,\hdots , W_t^n)$ and $\widetilde{\boldsymbol{W}}_t = (\widetilde{W}_t^1,\hdots, \widetilde{W}_t^n)$ are given by
    \begin{equation}
        \begin{aligned}
            \boldsymbol{dp} &= (-\nabla V - \beta\gamma_1\boldsymbol{p})\boldsymbol{d}t + \sqrt{2\gamma_1}\,\boldsymbol{d}\widetilde{\boldsymbol{W}}_t,\\
            \boldsymbol{dq} &= (\boldsymbol{p} - \beta\gamma_2\nabla V)\boldsymbol{d}t + \sqrt{2\gamma_2}\,\boldsymbol{dW}_t.
        \end{aligned}
    \end{equation}
    This is Euclidean Langevin dynamics with noise in the entire phase space. A potential benefit of such dynamics is its enhanced exploration of phase space and its amenability to moment estimates.
\end{example}

\section{Conclusion}\label{sec:conclusion}
In this work, we recalled the notion of reductive Lie algebras and their connection to Riemannian geometry to obtain an explicit characterisation of Riemannian Brownian motion. Via Malliavin's transfer principle, we lifted deterministic curves describing symplectic mechanics to semimartingale-valued curves describing stochastic symplectic mechanics. Deterministic symplectic mechanics has Gibbs measures as the natural invariant measures, which are not preserved under Malliavin's transfer principle. By including double-bracket dissipation in addition to the stochastic perturbations, the Gibbs measures are recovered as the invariant measures, but now of the stochastic differential equations describing stochastic symplectic mechanics with structure-preserving dissipation. This result can be interpreted as a symplectic fluctuation-dissipation theorem. We then provide several examples of how one would use this framework on a reductive Lie group and in the Euclidean setting for kinetic Langevin dynamics. In the reductive Lie group case, a natural choice for diffusion Hamiltonian is to couple the noise to the momentum variable, which leads to Riemannian Brownian motion in the position variable. 

The symplectic framework naturally gives rise to three families of kinetic Langevin diffusions when the diffusion Hamiltonians are chosen to be linear in the Darboux coordinates:
\begin{itemize}
    \item \textbf{Momentum Langevin diffusion} – Here, noise and dissipation act only at the momentum level, corresponding to the standard kinetic Langevin diffusion.
    \item \textbf{Position Langevin diffusion} – In this case, additive noise and dissipation appear at the position level and multiplicative noise appears at the momentum level. In the position coordinate, the noise is Riemannian Brownian motion. The multiplicative noise vanishes if the momentum commutes with the basis vectors of the Lie algebra. The dissipation is determined by the potential and is generally nonlinear.
    \item \textbf{Symplectic Langevin diffusion} – This combines both momentum and position Langevin mechanisms, leading to a diffusion process where noise is introduced in the full phase space.
\end{itemize}
Further generalisations are possible by choosing different diffusion Hamiltonians, not necessarily linear in the Darboux coordinates. 

As was pointed out by \cite{kong2024convergence}, the degenerate nature of the noise for the momentum Langevin equation poses several analytical challenges. We expect that position Langevin will pose similar challenges. While the symplectic approach produces RBM on reductive Lie groups, we comment for Lie groups that are not reductive, the symplectic approach does not produce RBM since the Lie exponential does not coincide with the Riemannian exponential. By means of the symplectic approach, we are able to formulate Langevin dynamics with noise in the full phase space while maintaining the Gibbs measure as the invariant measure. This may aid future analyses of the symplectic Langevin model. 

We focused on formulating symplectic Langevin diffusions on reductive Lie groups because of its numerical benefits over general Riemannian manifolds. In future work, we plan to use the theoretical advantages that reductive Lie groups offer in simulations. In this simulation study, we will compare the various types of Langevin diffusions in their performance in sampling distributions and consider different types of numerical methods. The symplectic nature of the Langevin diffusions gives strong indication for the use of splitting methods and symplectic and/or energy preserving Lie group integrators. Two more directions of future research are, firstly, to extend the present setting to reductive homogeneous spaces, which include Stiefel manifolds, Grasmannians and flag manifolds and, secondly, the reductive Lie group setting includes also bi-invariant pseudo-metrics when one does not insist on compactness, which have negative curvature.

\section*{Acknowledgements}
We are grateful to Gergely Bodo, Sonja Cox, Darryl Holm, Alexander Lewis and Roy Schieven for many insightful discussions during the preparation of this work. EL was supported by NWO grant VI.Vidi.213.070. ODS acknowledges funding for a research fellowship from Quadrature Climate Foundation.

\bibliographystyle{plainnat}
\bibliography{biblio}

\end{document}